\documentclass[reqno,11pt]{amsart}
\usepackage[T1]{fontenc}
\usepackage{geometry}
\title[Spectral asymmetry, SUSY and the equivariant Riemann-Roch defect]{Spectral asymmetry, supersymmetry and the equivariant Riemann-Roch defect}
\date{}
\author{Gayana Jayasinghe} 
\email{mgsjayasinghe@gmail.com}
\author{Alex R. Taylor} 
\email{alext3@illinois.edu}
\author{Xinran Yu}
\email{xinran4@illinois.edu}

\keywords{}

\makeatletter
\renewcommand{\tocsection}[3]{%
  \indentlabel{\@ifnotempty{#2}{\bfseries\ignorespaces#1 #2\quad}}\bfseries#3}
  
\renewcommand{\tocsubsection}[3]{%
  \indentlabel{\@ifnotempty{#2}{\ignorespaces#1 #2\quad}}#3}
\let\tocsubsubsection\tocsubsection% Update for \subsubsection
    \renewcommand{\tocsubsubsection}[3]{%
  \indentlabel{\@ifnotempty{#2}{\ignorespaces#1 #2\quad}}#3}

\newcommand\@dotsep{4.5}
\def\@tocline#1#2#3#4#5#6#7{\relax
  \ifnum #1>\c@tocdepth % then omit
  \else
    \par \addpenalty\@secpenalty\addvspace{#2}%
    \begingroup \hyphenpenalty\@M
    \@ifempty{#4}{%
      \@tempdima\csname r@tocindent\number#1\endcsname\relax
    }{%
      \@tempdima#4\relax
    }%
    \parindent\z@ \leftskip#3\relax \advance\leftskip\@tempdima\relax
    \rightskip\@pnumwidth plus1em \parfillskip-\@pnumwidth
    #5\leavevmode\hskip-\@tempdima{#6}\nobreak
    \leaders\hbox{$\m@th\mkern \@dotsep mu\hbox{.}\mkern \@dotsep mu$}\hfill
    \nobreak
    \hbox to\@pnumwidth{\@tocpagenum{\ifnum#1=1\bfseries\fi#7}}\par% <-- \bfseries for \section page
    \nobreak
    \endgroup
  \fi}

\AtBeginDocument{%
\expandafter\renewcommand\csname r@tocindent0\endcsname{0pt}
}
\def\l@subsection{\@tocline{2}{0pt}{2.5pc}{5pc}{}}
\def\l@subsubsection{\@tocline{2}{0pt}{4.5pc}{5pc}{}}

\makeatother

\makeatletter

\renewcommand\subsection{\@secnumfont}{\bfseries}%
\renewcommand\subsection{\@startsection{subsection}{3}
  \z@{.5\linespacing\@plus.7\linespacing}{.5em}%
  {\normalfont\bfseries}}

\renewcommand\subsubsection{\@secnumfont}{\bfseries}%
\renewcommand\subsubsection{\@startsection{subsubsection}{3}
  \z@{.5\linespacing\@plus.7\linespacing}{.5em}%
  {\normalfont\bfseries}}
  
\makeatother

\usepackage{amsmath}
\usepackage{amsfonts, mathrsfs,amssymb} % mathbb, mathscr, geqslant
\usepackage{hyperref}
\usepackage{slashed} % Dirac
\usepackage{mathtools}
\usepackage{tikz-cd}
\usepackage{comment}
\allowdisplaybreaks

\usepackage{enumerate}
\DeclareMathOperator{\cl}{c\ell} %% Clifford action
\DeclareMathOperator{\Ker}{Ker} %% nullspace
\DeclareMathOperator{\Str}{Str} %% supertrace
\DeclareMathOperator{\Tr}{Tr} %% trace
\DeclareMathOperator{\sign}{sign}

\DeclareMathOperator{\Spec}{Spec} %% Spectrum

\DeclareMathOperator{\Id}{Id}

\newcommand\mf\mathfrak
\newcommand\mc\mathcal
\newcommand\mb\mathbb

\newcommand\numberthis{\addtocounter{equation}{1}\tag{\theequation}}

\numberwithin{equation}{section} %This labels equations within the section

\newtheorem{theorem}{Theorem}[section]
\newtheorem*{theorem*}{Theorem}
\newtheorem{mainthm}{Theorem}

\newtheorem{lemma}[theorem]{Lemma}

\newtheorem{remark}[theorem]{Remark}
\theoremstyle{definition}

\newtheorem{question}[theorem]{Question}

\newtheorem{example}[theorem]{Example}
\AtBeginEnvironment{example}{%
  \pushQED{\qed}%
}
\AtEndEnvironment{example}{\popQED\endexample}

\usepackage[commandnameprefix=always,  defaultcolor=orange!70!black]{changes}
\usepackage{adjustbox}
\usepackage{graphicx}
\graphicspath{ {images/} }
\usepackage{caption}
\usepackage{subcaption}
\usepackage{float}
\usepackage{comment}

\begin{document}
\pagenumbering{roman} 
\begin{abstract}
We investigate the relationship between two interpretations of equivariant Riemann-Roch defects of complex spaces with conic singularities; as (i) equivariant $\eta_{T}$ and $\xi_{T}$ invariants, and as (ii) supertraces over local cohomology groups. This leads to a novel threefold partitioning of the $L^{2}$-spinor space on the link and a corresponding splitting of $\xi_{T}$. Two partitions correspond to cohomological contributions coming from $\bar\partial$-Neumann and $\bar\partial$-Dirichlet operators on the cone, while the third partition makes no contribution to the equivariant index defect, which we show is due to supersymmetric cancellations on the cone that we call \textit{lifted supersymmetry}.

We use this to define complex equivariant $\xi_T$ and $\eta_T$ invariants, which are equivalent to the usual invariants but are easier to compute. We highlight connections to related algebraic and analytic descriptions of Riemann-Roch defects in the literature, both at the level of numbers and their categorifications, and explore connections to existing notions of supersymmetric cancellations in physics and mathematics.
\end{abstract}

\maketitle
\tableofcontents
\pagenumbering{arabic}

%------------------------------
% body
%------------------------------
\section{Introduction}

Let $X$ be a complex space with singular locus $X^{\mathrm{sing}}$. Suppose that a holomorphic vector bundle $E\to X$ is given together with a choice of sheaf of sections $\mathcal D$ on which the twisted Dolbeault operator $\overline{\partial}_E$ defines a Dolbeault complex. If a self-map $f:X\to X$ preserves the complex structure and the auxiliary data, then it induces a geometric endomorphism $T=T_f$ of the Dolbeault complex, and hence endomorphisms on its cohomology group $\mathcal H^q$.

The equivariant Riemann-Roch problem is concerned with the equivariant
Dolbeault index
\[
    \operatorname{ind}_{T_f}(\overline{\partial}_E) :=\sum_{q\ge 0}(-1)^q\Tr(T_f | \mathcal{H}^q)= \Str(T_f | \mathcal{H}^q).
\]
In the smooth compact setting, this invariant admits a purely local expression in terms of fixed-point contributions via the holomorphic Lefschetz formulas of Atiyah--Bott \cite{AtiyahBott1,AtiyahBott2} and Atiyah--Segal--Singer \cite{atiyah1965index}. When singularities are present, the same principle continues to hold, up to an additional correction term. In that case we have
\[
    \operatorname{ind}_{T_f}(\overline{\partial}_E) = \sum_{F\in \operatorname{Fix}(f)\cap X^{\mathrm{reg}}} \operatorname{RR}(F,T_f) + \operatorname{Def}(T_f,\mathcal D),
\]
where $\operatorname{RR}(F,T_f)$ denotes the local Riemann-Roch number associated to a fixed point $F$ in the regular locus, and the Riemann-Roch defect $\operatorname{Def}(T_f,\mathcal D)$ encodes the contribution of the singular locus and the chosen Dolbeault complex. The defect admits several descriptions and we study two of them in this paper, (i) an analytic formulation in terms of equivariant $\xi_T$ invariants, and (ii) an algebraic description given by supertraces over cohomology groups of neighbourhoods of the singularities. The purpose of this article is to describe a connection between these two descriptions of the equivariant Riemann-Roch defect, as well as the objects that categorify it.

In the analytic formulation, the $\eta$ and $\xi$ invariants of odd dimensional smooth manifolds~$Z$~were first introduced by Atiyah-Patodi-Singer \cite{atiyah1975spectral} to describe the index defect of Dirac-type operators $D$ on even dimensional manifolds $M$ with boundary $\partial M=Z$. These invariants are defined in terms of spectral data of an induced Dirac-type operator $A$ on the boundary. Briefly, the boundary contribution coming from the nonzero spectrum of $A$ is the \emph{equivariant $\eta$ function}
\[
    \eta_T(s,A):=\sum_{\lambda \in \mathrm{Spec}(A)\setminus\{0\}} \sign(\lambda) |\lambda|^{-s} \Tr (T|E_{\lambda}), 
\]
where $\Tr(T|E_{\lambda})$ denotes the trace over the (finite dimensional) eigenspace $E_{\lambda}$ (see Remark \ref{Remark_different_renormalizations} for alternative renormalizations in the physics literature). The function is holomorphic when we have $\mathrm{Re}(s) > -2$, and admits a meromorphic continuation to $\mathbb{C}$ with a regular value at $s = 0$, cf. \cite{weiping1990note}. Thus one may consider the \emph{equivariant $\eta$ invariant}, defined as the value $\eta_{T}(0,A)$. The \emph{equivariant $\xi$ function} is then
\[
    \xi_{T}(s) := \frac{1}{2}(\eta_{T}(s,A) + h_{T})
\]
where $h_{T} = \Str(T|_{L^{2}\Ker A})$ is the boundary contribution coming from the nullspace of $A$. The value $\xi_{T}(0)$ is precisely the index defect in the equivariant Lefschetz fixed point formula in \cite{weiping1990note} for spaces with isolated conic singularities, and in the equivariant APS index theorem \cite{donnelly1978eta}.  It specializes to the Riemann-Roch defect for the equivariant Dolbeault index.

On the other hand, Baum-Fulton-MacPherson-Quart investigated (equivariant) Riemann-Roch formulae \cite{baum1979lefschetz,baumfultonmacKtheoryRiemannRoch,baumfultonmacpheresonRiemannRoch} on certain singular spaces where the equivariant Riemann-Roch defect (in singular cohomology) was presented as a regularized trace over the local ring on locally complete intersections, described in the expository article \cite{Baumformula81} by Baum. This interpretation was extended as a supertrace on local cohomology groups on stratified pseudomanifolds with wedge metrics (iterated conic metrics) and wedge almost complex structures for twisted Spin-c Dirac operators, and twisted Dolbeault-Dirac operators when the wedge almost-complex structure is integrable in \cite{jayasinghe2023l2,jayasinghe2024holomorphic}. These cohomology groups correspond to harmonic forms of corresponding Laplace-type operators with appropriate boundary conditions.

This leads to a natural question: \textbf{how are the spectral data corresponding to these two descriptions linked?} The main results of the present article answer this question: for a twisted Dolbeault-Dirac operator on a cone $C(Z)$ equipped with a complex structure $J$, the spectrum of the induced Dirac operator on the link $Z$ admits a special threefold partition. Two parts of this partition correspond to local cohomology groups of Dolbeault complexes on the cone $C(Z)$, while the third part is non-trivial but does not contribute to the equivariant $\xi$ invariant, due to cancellations that correspond to supersymmetry on the lifts of the eigensections on the link to those on the cone (see Remark \ref{Remark_why_call_it_lifted_SUSY}). We refer to this phenomenon as \textit{\textbf{lifted supersymmmetry}}.

For clarity of exposition, we focus in this paper on cones over smooth links, although the results extend more generally to stratified pseudomanifolds equipped with wedge metrics (see for instance, \cite{Albin_2017_index,jayasinghe2023l2}). Let $C(Z) = C_x(Z) = \big([0,\infty)_x \times Z\big)\big/ \big(\{0\} \times Z \big)$ denote a cone over a compact manifold $Z$, equipped with a wedge metric
\[
    g = dx^2 + x^2 g_Z,
\] 
a compatible wedge complex structure $J$, and a Hermitian vector bundle $E$.
The twisted Dolbeault-Dirac operator on $C(Z)$ admits a factorization
\[
    D = c\ell(dx) \circ \left( \nabla^E_{\partial_x} + A_{1}+A_{2} \right),
\]
where $A_{2}$ is the \emph{transversal part} of the Dirac operator (where the derivatives are with respect to vector fields transverse to $R=xJ(\partial_{x})$, which in the K\"ahler setting is the Reeb vector field). The even/odd degree complex spinor bundles $F^{\pm}$ admit direct sum decompositions $F^{\pm}=G^{\pm} \oplus \beta \wedge G^{\mp}$, where $G^{\pm}$ are the spinor bundles for the transversal Dirac operator, i.e., sections valued in the~$(0,q)$ projection of the bundle $\Lambda^*(TZ/T\mathcal{F}_{R}) \otimes E$ where $\mathcal{F}_R$ is the foliation given by integrating the vector field $R$. We refer to Section \ref{section_wedge_geo} for more details.

We define the \textbf{restriction operator} which evaluates smooth sections of the Dolbeault complex on $C(Z)$ at $x=1$:
\begin{align*}
    &\mathcal{R}: \Omega^{0,q}(C(Z);E) \to \Gamma(Z;E|_{Z}) \\
    & \mathcal{R}(\psi(x,z))=\psi(1,z),
\end{align*}
which also extends to sections which are smooth near $x=1$.

\medskip
Given a Dirac-type operator $A$ on $Z$, we say that it satisfies: 
\begin{enumerate}
    \item the \textbf{Witt condition} if $\Spec(A) \cap \{0\} = \varnothing$;
    \item the \textbf{spectral Witt condition} if $\Spec(A) \cap (-\tfrac{1}{2}, \tfrac{1}{2}) = \varnothing$.   
\end{enumerate}

Our first main result, Theorem \ref{intro: mainthm}, describes a spectral decomposition for the operator $A_{1}$~restricted to $Z$. The result also makes an explicit correspondence between the local cohomology groups and the eigensections of $A_{1}$ on the link. In particular, there is a four-part description of the eigensections of $A_{1}$ lying in the transverse harmonic subspaces $\mathcal{E}^{\pm} = \ker A_{2} \cap \Gamma(Z;F|^{\pm}_Z)$.

Given a Dolbeault operator on a cone $C(Z)$ as above, we pick self-adjoint domains, corresponding to ideal boundary conditions at~$x=0$, denoted by $W$, and boundary conditions at~$x=1$, denoted by $B=N,D$ ($\overline{\partial}$-Neumann conditions and the adjoint boundary condition which we call the $\overline{\partial}$-Dirichlet condition respectively). This defines a complex $\mathcal{P}_{W,B}$ and we define the cohomology groups $\mathcal{H}_B^q = \mathcal{H}^q(\mathcal{P}_{W,B}):=\ker D$ where $D$ is equipped with the corresponding domain. If the spectral Witt condition holds, all choices of $W$ are equivalent.

With this setup, we define the \textbf{complex equivariant $\xi$ function}
\begin{equation}
    \widetilde{\xi}_{W,T}(s):=\frac{1}{2} \Big(\Str \left(T\exp (-s\sqrt{\Delta_Z})|_{\mathcal{H}(\mathcal{P}_{W,N}(X))}\right) + \Str\left(T^*\exp (-s\sqrt{\Delta_Z})|_{\mathcal{H}(\mathcal{P}^*_{W,N}(X))}\right) \Big)
\end{equation}
which is regular at $s=0$, and we drop the subscript $W$ from the notation in the case of the spectral Witt condition; see Subsection \ref{subsection_complex_equiv_xi} for more details. We remark that this description of the Riemann-Roch defect as a supertrace is natural, given that the Riemann-Roch number itself is a supertrace over cohomology. 

\begin{mainthm}[Spectral decomposition and vanishing of the third sector]\label{intro: mainthm}
    Assume the spectral Witt condition for the induced Dirac operator on $Z$. 
    Then:
    
    \begin{enumerate}
        \item There exists an orthonormal basis $\{\psi_j\}\subset \mathcal H_N^q\oplus \mathcal H_D^q$ such that $\mathcal R(\psi_j)$ is an eigensection of the induced Dirac operator $A$ on $Z$.  With respect to the splitting $F^\pm|_Z = G^\pm \oplus \beta\wedge G^\mp$, $\bar\partial$-Neumann boundary conditions yield boundary values in $G^\pm$, while $\bar\partial$-Dirichlet boundary conditions corresponds to a $\beta$-shift of $G^\pm$:
        \[
            \mathcal{R}\bigl(\mathcal H_N^{\mathrm{even/odd}}\bigr) \subset \mathcal E \cap G^{\pm}, 
            \qquad
            \mathcal{R}\bigl(\mathcal H_D^{\mathrm{even/odd}}\bigr) \subset \beta\wedge\bigl(\mathcal E \cap G^{\mp}\bigr).
        \]
    
        \item The even-degree spinors on $Z$ admit an orthogonal decomposition
        \[
            L^2\Omega^{0,q}(Z;F^+|_Z)  = \mathcal H_1^q \oplus \mathcal H_2^q \oplus \mathcal H_3^q,
        \]
        where $\mathcal H_1^q=\mathcal R(\mathcal H_N^q)$, $\mathcal H_2^q=\mathcal R(\mathcal H_D^q)$, and $\mathcal H_3^q=\mathcal E^\perp$. This in turn yields a threefold splitting of the $\xi_T$ function:
        \begin{equation}\label{threefold-partition}
            \xi_{T}(s) = \xi_{T,1}(s) + \xi_{T,2}(s) + \xi_{T,3}(s),
        \end{equation}
        cf. Equation \eqref{threefold-partition 2} for more details. 
    
        \item For geometric endomorphisms $T=f_g^*$ with no fixed points on the link $Z=\{x=1\}$, the equivariant and complex equivariant $\xi$ invariants coincide:
        \[
            \widetilde{\xi}_{T}(0)=\xi_{T}(0).
        \]
        \textbf{In particular, $\mathcal H_3^q$ does not contribute to $\xi_T$.}
    \end{enumerate}
\end{mainthm}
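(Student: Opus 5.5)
Separation of variables on the truncated cone $C_{[0,1]}(Z)$ is the approach I would take throughout. Using the factorization $D=c\ell(dx)\circ(\nabla^E_{\partial_x}+A_1+A_2)$, I first check that $A_1$ (essentially $\frac{1}{x}$ times a combination of $\nabla_R$ and degree-counting terms) and $A_2$ (the transversal Dirac operator) anticommute, or at least commute up to terms preserving the splitting. Then $A=A_1+A_2$ restricted to $x=1$ can be diagonalized on $\mathcal E\oplus\mathcal E^\perp$. On $\mathcal E^\perp$ the operators pair as in a standard supersymmetric block: $A_2$ maps the $\lambda$-eigenspace of $A_1$ on $G^+$ to that on $G^-$, so eigenvalues of $A$ come in pairs $\pm\sqrt{\mu^2+\nu^2}$ on two-dimensional invariant blocks. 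On $\mathcal E$ the operator $A$ reduces to $A_1$, which in the basis $G^\pm\oplus\beta\wedge G^\mp$ is diagonal: $\beta\wedge$ intertwines the two summands and shifts the $A_1$-eigenvalue by a fixed amount determined by the degree.

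For part (1), I would write a harmonic element of $\mathcal H^q_B$ as a separated sum $\psi=\sum_j a_j(x)\phi_j$ with $\phi_j$ eigensections of $A$. On the truncated cone, $D\psi=0$ gives $a_j'+\lambda_j a_j/x=0$ after conjugation by the $x^{-n/2}$ weight, so $a_j=x^{-\lambda_j}$. The spectral Witt condition ensures $L^2$-ness near $x=0$ picks exactly one sign, independently of $W$. Next I impose the boundary condition at $x=1$. The $\bar\partial$-Neumann condition says the interior product with $\partial_x^{0,1}$ (equivalently with $dx-i\beta$) vanishes, which forces the boundary value to have no $\beta$-component and to lie in $G^\pm$. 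The $\bar\partial$-Dirichlet condition says the wedge with $\bar\partial x$ vanishes, forcing the boundary value into $\beta\wedge G^\mp$. For an element of $\mathcal E^\perp$ both components are coupled by the $2\times2$ block, so neither condition can hold for a nonzero solution. Hence the restrictions land in $\mathcal E\cap G^\pm$ and $\beta\wedge(\mathcal E\cap G^\mp)$ respectively. Choosing $\psi_j$ to be the individual separated solutions gives the orthonormal basis, after normalization. Conversely, every eigensection in $\mathcal E\cap G^\pm$ of the correct sign lifts, which gives the equalities needed in (2).

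For part (2), the decomposition $L^2=\mathcal R(\mathcal H_N)\oplus\mathcal R(\mathcal H_D)\oplus\mathcal E^\perp$ follows from the bijection just established together with a check that eigensections of $\mathcal E$ of the "wrong" sign are exactly the $\beta$-shifts (or un-shifts) of those of the right sign. This check uses the shift identity for $A_1$ under $\beta\wedge$. Since $T=f_g^*$ is a geometric endomorphism, it commutes with $A_1$, $A_2$, and $\beta\wedge$, and so preserves each summand. Splitting the sum defining $\eta_T$ and $h_T$ accordingly defines $\xi_{T,i}$ and gives \eqref{threefold-partition}.

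For part (3), there are two things to show. First, $\xi_{T,1}+\xi_{T,2}$ equals $\widetilde\xi_T$: signs of eigenvalues on $\mathcal R(\mathcal H_N)$ and $\mathcal R(\mathcal H_D)$ match the supertrace gradings, and $|\lambda|^{-s}$ versus $e^{-s\sqrt{\Delta_Z}}$ regularizations agree at $s=0$ under analytic continuation. Second, $\xi_{T,3}(0)=0$. On each $2\times2$ block $T$ acts by the same scalar trace on the $+$ and $-$ eigenvalue lines, since $A_2$ intertwines them and commutes with $T$. Therefore $\sign(\lambda)$ cancels pairwise and $\eta_{T,3}(s)\equiv0$ identically, while the block has no kernel contribution. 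I expect the main obstacle to be the precise regularization comparison at $s=0$, i.e. that the two zeta-type continuations of the $\mathcal H_1\oplus\mathcal H_2$ parts agree. The no-fixed-points hypothesis on $Z$ should make the equivariant heat traces $\Tr(Te^{-t\Delta})$ decay rapidly as $t\to0$ (no local fixed-point contributions). This kills all pole and polynomial terms and makes both continuations regular with equal values. I would verify this via Mellin transforms of the equivariant heat trace restricted to each $T$-invariant sector.
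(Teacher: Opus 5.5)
Your parts (1)--(2) track the paper's argument. This includes the separated solutions $x^{-\lambda}\phi$, the $L^2$ condition at $x=0$ plus spectral Witt fixing the sign, and Lemma~\ref{bdy-cond-and-splitting} selecting $G^{\pm}$ versus $\beta\wedge G^{\mp}$. It also includes $\beta\wedge$, $\iota_{\beta^\#}$ exchanging eigenvalue signs; note that Lemma~\ref{A1-operator-lemma} gives a sign flip, not a shift.

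The genuine gap is in part (3). You assert $\eta_{T,3}(s)\equiv 0$ because each $2\times 2$ block of $\mathcal E^\perp$ has eigenvalues $\pm\sqrt{\mu^2+\nu^2}$ with equal $T$-traces. That symmetric pairing is what anticommutation of $A_1$ and $A_2$ would give, and it fails. The paper states explicitly that $\xi_{T,3}(s)$ does not vanish for general $s$. Degeratu's computation for $S^{2n-1}$, recalled in Section~\ref{section:examples}, shows this concretely: in the third sector each $V_{\mu(a,r,b)}$ occurs with one positive and one negative eigenvalue of \emph{different} absolute values. So $\eta_3(g,s)\not\equiv 0$, and the cancellation only appears in the limit $s\to 0$. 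Pairwise cancellation of $\sign(\lambda)$ cannot be the mechanism.

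The same false symmetry also props up your exclusion of $\mathcal E^\perp$ modes in part (1). Noting that each separated solution has both components nonzero suffices only if at most one eigenvalue per block is $L^2$-admissible. For an asymmetric block, a combination of two admissible solutions could have boundary value in $G^{\pm}$. That step therefore needs its own justification; the paper simply takes $\mathcal R(\mathcal H_B)\subset\mathcal E$ as given.

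The missing idea is that the paper obtains $\xi_{T,3}(0)=0$ only indirectly, by subtraction. Both $\xi_T(0)$ and $\widetilde\xi_T(0)$ are identified with the same local Lefschetz number at the cone point. The first comes from Zhang's equivariant formula for isolated conic singularities \cite{weiping1990note}, adapted to $h_T\neq 0$. The second comes from the symmetrized $L^2$ Lefschetz--Riemann--Roch formula of \cite{jayasinghe2023l2} in terms of local cohomology. Hence the two invariants are equal. Separately, the renormalization results of \cite[\S 5.2.5]{jayasinghe2023l2} give $\widetilde\xi_T(0)=\xi_{T,1}(0)+\xi_{T,2}(0)$, which forces $\xi_{T,3}(0)=0$.

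A direct proof would instead have to show that $\sum_{\mathrm{blocks}}\Tr(T|_{\mathrm{block}})\bigl(|\lambda_+|^{-s}-|\lambda_-|^{-s}\bigr)$ continues to $0$ at $s=0$. Presumably this would use the absence of fixed points on $Z$ to make the value at $s=0$ insensitive to the offset between $|\lambda_+|$ and $|\lambda_-|$. That requires exactly the fine spectral information the theorem is meant to avoid.

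Your heat-trace heuristic does not supply this. Rapid decay of a sector heat trace would give regularity at $s=0$, not vanishing. Moreover, the projection onto $\mathcal E=\ker A_2$ is not a classical pseudodifferential operator, since $A_2$ is only transversally elliptic (compare the Szeg\H{o} projection on $S^{2n-1}$). So fixed-point localization does not apply sector by sector without further work.
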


The first two parts of Theorem \ref{intro: mainthm} are a restatement of Theorem \ref{mainthm} while part three is Theorem~\ref{Theorem_simpler_invariant_computation}.

We interpret Theorem \ref{intro: mainthm} in terms of supersymmetry, or rather a lifted supersymmetry at the level of the cone; (see \cite{jayasinghe2023l2}, Lemma 3.13 in particular, and \cite{jayasinghe2025SUSY} for descriptions of supersymmetry on cones). Theorem \ref{intro: mainthm} shows that the partition $\mathcal{H}^q_3$ does not contribute to the equivariant index defect because of renormalized supersymmetric cancellations leading to $\xi_{T,3}(0)=0$. However, the fact that $\xi_{T,3}(s)$ does not vanish for general $s$ shows that there is no corresponding cancellation happening at the level of the alternating traces over the eigensections of the operator on the link.

In this sense, the cancellation is made visible only after lifting to the cone. Our thesis is that the renormalizations used in the classical definition of the $\eta$ invariant are not well-suited to making the underlying supersymmetric cancellations transparent. This motivates our definition of the new equivariant $\xi$ function $\widetilde{\xi}_{W,T}(s)$, meant to capture the supersymmetric cancellations more clearly.

When the Witt condition fails, the Dolbeault-Dirac operator $D$ need not be essentially self-adjoint, and self-adjoint extensions are characterized by Lagrangian subspaces of the Gelfand-Robbin quotient. We study the case of rolled-up extensions $\mathcal{D}_{\mathfrak{L}_W}$ determined by \textbf{cohomological pre-domains $W$}, which induces compatible domains for a self-adjoint extension of $D$. We now formulate the Riemann-Roch defect for such rolled-up extensions of the Dolbeault-Dirac operator in the following theorem, the proof of which is in Subsection \ref{Subsection_equiv_ind_def}.

\begin{mainthm} 
    \label{intro: Theorem_general_domains_xi_invariants}
    Consider the setting of Theorem \ref{intro: mainthm} but without the spectral Witt condition. For any rolled-up extension corresponding to a choice of cohomological pre-domain $W$, the $\widetilde{\xi}_T$ invariants satisfy
    \begin{equation}
       \widetilde{\xi}_{T,W} =\widetilde{\xi}_{T,\min}+\frac{1}{2}\Big(\Str (T|_{W^q})- \Str (T^*|_{(W^*)^q}) \Big).
    \end{equation}  
\end{mainthm}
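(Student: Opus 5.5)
The plan is to compare the two cohomologies $\mathcal H(\mathcal P_{W,N})$ and $\mathcal H(\mathcal P_{\min,N})$ degree by degree, and likewise for the adjoint complexes, and to reduce the difference of the regularized supertraces in the definition of $\widetilde{\xi}_{W,T}$ to a finite-dimensional supertrace.

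\textbf{Step 1: domain splitting.} The starting point is the structure of rolled-up extensions. By definition of a cohomological pre-domain, in each degree the domain satisfies
\[
\mathcal D_W^q=\mathcal D_{\min}^q\oplus W^q ,
\]
where $W^q$ is a subspace of the Gelfand--Robbin quotient spanned by model solutions $x^{\lambda}\phi$, with $\phi$ an eigensection of the induced operator on $Z$ and $|\lambda|<\tfrac12$. These model solutions are built from the transverse harmonic sections $\mathcal E$ of Theorem~\ref{intro: mainthm}(1). The adjoint complex has domain $\mathcal D_{\min}^*\oplus (W^*)^q$, where $W^*$ is the annihilator of $W$ under the symplectic pairing.

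\textbf{Step 2: locating the new harmonic elements.} Each element of $W^q$ is $\bar\partial$-closed with a Neumann-type boundary value at $x=1$. By the analysis of Theorem~\ref{intro: mainthm}(1), its restriction $\mathcal R$ lies in $\mathcal E\cap G^{\pm}$ for the $\mathcal P_{W,N}$ complex and in $\beta\wedge(\mathcal E\cap G^{\mp})$ for the adjoint complex. One should then check that enlarging the domain from $\min$ to $W$ adds exactly $W^q$ to the $\bar\partial$-Neumann harmonic space in degree $q$, up to a cancellation between $\bar\partial$-exact pieces. Dually, $W^*$ enlarges the adjoint harmonic space relative to $\min^*=\max$, in a way that contributes $-\Str(T^*|_{(W^*)^q})$ compared with the minimal choice. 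The cleanest route is an exact sequence of complexes
\[
0\to \mathcal P_{\min,N}\to \mathcal P_{W,N}\to (W^\bullet,0)\to 0 .
\]
The quotient complex has zero differential because $W$ consists of closed model solutions. Its long exact sequence is $T$-equivariant, since $T$ is a geometric endomorphism preserving $W$ by the assumption that the pre-domain is $T$-invariant. Supertraces are additive along such a sequence, which gives
\[
\Str(T|_{\mathcal H(\mathcal P_{W,N})})=\Str(T|_{\mathcal H(\mathcal P_{\min,N})})+\Str(T|_{W}).
\]
For the adjoint complex the inclusion runs the other way, $\mathcal P^*_{W}\subset\mathcal P^*_{\min}$ with quotient $W^*$. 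This produces the minus sign.

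\textbf{Step 3: regularization.} The factor $\exp(-s\sqrt{\Delta_Z})$ acts on the finite-dimensional spaces $W^q$ and $(W^*)^q$ by $e^{-s|\mu|}$, where $\mu$ is the relevant link eigenvalue. These factors tend to $1$ as $s\to0$, so the finite correction is regular and equals $\Str(T|_{W^q})-\Str(T^*|_{(W^*)^q})$ at $s=0$. Combining this with the factor $\tfrac12$ in the definition of $\widetilde{\xi}$ yields the formula in the theorem.

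\textbf{Main obstacle.} The hard part is Step~2, specifically exactness and degree bookkeeping. Elements of $W^{q}$ might be hit by $\bar\partial$ from degree $q-1$, or fail to be harmonic for the Neumann condition at $x=1$, so the connecting maps in the long exact sequence need to be controlled. I would first use the "cohomological" hypothesis on $W$ to show these connecting maps vanish, and use the explicit separation-of-variables form of the model solutions (powers $x^{\lambda}$ times sections of $\mathcal E$) to verify that the Neumann condition at $x=1$ is automatically satisfied.
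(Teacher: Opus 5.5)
There is a genuine gap, on the adjoint side of Step 2. Your treatment of the $P$-side and your Step 3 are in line with the paper. The paper simply asserts the splitting of harmonic spaces $\mathcal H^q(\mathcal P_{W,N})=W^q\oplus\mathcal H^q(\mathcal P_{\min,N})$, so no connecting maps arise. It then notes that the finite-dimensional pieces need no renormalization.

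On the adjoint side, your two claims ``$\mathcal P^*_W\subset\mathcal P^*_{\min}$'' and ``with quotient $W^*$'' cannot both hold.

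\begin{itemize}
\item The adjoint of the minimal complex has $P^*$-domain $\mathcal D_{\min}(P)^*=\mathcal D_{\max}(P^*)$, which corresponds to all of $\mathcal H(P^*)$.
\item The complex $\mathcal P^*_W$ has $P^*$-domain $\mathcal D_{\min}(P^*)\oplus W^*$.
\item So the quotient is $\mathcal H(P^*)/W^*$, the part of $\mathcal H(P^*)$ that pairs with $W$ under $\omega$. It is not $W^*$.
\end{itemize}

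Two checks: for $W=0$ one has $W^*=\mathcal H(P^*)$ but the quotient is zero. For $W=\mathcal H(P)$ in Example \ref{Example_Complex_cone_6pi}, $W^*=0$ but the quotient is all of $\mathcal H(P^*)$. Hence ``this produces the minus sign'' is unjustified, and the term $\Str(T^*|_{(W^*)^q})$ does not come out of your argument.

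The paper uses a different reference complex on the adjoint side, one your own Step 1 already points to. It writes $\mathcal H^q(\mathcal P_{W^*,D})=(W^*)^q\oplus\mathcal H^q(\mathcal P_{\max,D})$. Here $\mathcal P_{\max,D}$ is the $\bar\partial$-Dirichlet complex dual to the maximal $P$-domain, so its $P^*$-domain is $\mathcal D_{\max}(P)^*=\mathcal D_{\min}(P^*)$.

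\begin{itemize}
\item $W^*$ is thus \emph{added} to a minimal domain, exactly as $W$ is on the Neumann side, and the inclusions run the same way on both sides.
\item The reference invariant in that argument is built from $\mathcal H(\mathcal P_{\min,N})$ and $\mathcal H(\mathcal P_{\max,D})$, i.e.\ from minimal domains for both $P$ and $P^*$.
\item The sign in front of the $W^*$ term then depends on which grading is used on $W^*$. It does not come from a reversed inclusion.
\end{itemize}

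If you instead read $\widetilde\xi_{T,\min}$ as the rolled-up invariant with $W=0$, your exact-sequence method, done correctly, gives the correction $\tfrac12\big(\Str(T|_{W})-\Str(T^*|_{\mathcal H(P^*)\ominus W^*})\big)$. Example \ref{example_Lefschetz_traces_0} confirms this for $W_{a,b}$ on $C(S^1_{6\pi})$:
\begin{itemize}
\item The difference from the minimal domain is $a^2\lambda^{-1}+b^2\lambda^{-2}$. This is exactly $\tfrac12\big(\Str(T|_W)-\Str(T^*|_{\mathrm{span}\{as_1+bs_2\}})\big)$.
\item By contrast, $\tfrac12\big(\Str(T|_W)-\Str(T^*|_{W^*})\big)$, with $W^*=\mathrm{span}\{bs_1-as_2\}$ in Dolbeault degree $1$, equals $\tfrac12(\lambda^{-1}+\lambda^{-2})$.
\end{itemize}
So under that reading the identity you derive is false. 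Under the paper's reading, your adjoint step compares with the wrong complex. Either way, you need to fix which complex serves as the reference on the adjoint side before the bookkeeping closes.
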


This is similar in spirit to other formulations of the index defect corresponding to domains for Dirac-type operators, for instance in \cite{chou1985dirac,chou1989criteria} (see Remark \ref{Remark_on_Chou_Zhang_work}).

The setting of metric cones with complex structures demonstrates the various phenomena investigated in this paper, where equivariant $\eta$ invariants and index defects have been studied in detail. However the results of this article can be easily extended to cones over stratified pseudomanifolds with wedge metrics. It seems likely that the results may extend \textit{mutatis mutandis} to other singular metrics on such spaces, as well as to spaces with non-integrable almost complex structures that respect the geometry of the singularity. We explore this and other relevant themes in Section \ref{section_conclude}.

The article is organized as follows. Section \ref{section_wedge_geo} of this article introduces the main objects of study, the Dolbeault complexes and their domains on cones with complex structures. In Section~\ref{section_main_results} we define the new complex equivariant $\widetilde{\eta}_T$ and $\widetilde{\xi}_T$ invariants, and prove our main results. While many examples have been interspersed to illustrate the need for various definitions and assumptions, Section \ref{section:examples} goes over key examples. In Section \ref{section:examples} we exhibit the computational efficacy of the $\widetilde{\eta}_T$ and~$\widetilde{\xi}_T$ invariants in comparison with the classical invariants. In Section \ref{section_conclude} we expand on the context of our results and make concluding remarks, in particular showing relations to other approaches to the Riemann-Roch defect in mathematics and relationships to physical interpretations. We explain conjectural extensions which we hope to address in upcoming work.

\medskip
\noindent \textbf{Acknowledgements.} 
The authors thank Pierre Albin for several discussions, his helpful comments and encouragement. GJ and XY thank Hadrian Quan and Manousos Maridakis for discussions on related topics and ideas. GJ thanks the University of Peradeniya for support in various forms while pursuing this research.
\newpage\section{Wedge Geometry and Dirac Operators} \label{section_wedge_geo}

In this section we introduce the geometric and functional analytic setting of this article.

\subsection{Wedge geometry and Dirac operators}

We consider the metric cone $\widehat{X} = C(Z) = \big([0,\infty)_x \times Z\big)\big/\big(\{0\}\times Z\big)$, over a smooth $n$-dimensional manifold $Z$, where $x$ denotes the radial variable and $\{0\}\times Z$ is collapsed to a point.  We equip $\widehat{X}$ with the \textbf{wedge metric}
\[
    g = dx^{2} + x^{2} g_{Z},
\]
where $g_{Z}$ is a fixed Riemannian metric on $Z$. The space $\widehat{X}$ is singular at the cone point and its resolution is the manifold with boundary $X_\infty = [0,\infty)_x \times Z$ whose metric is degenerate at~$x=0$. We will also work on the truncated model
\[
    X = [0,1)_x \times Z,
\]
equipped with the metric obtained by restricting $g$.

For the wedge metric, the natural differential operators to study are those belong to the \textbf{wedge tangent bundle} $\prescript{w}{}{TX}$, which is locally spanned by 
\[
    \Big\{e_0 = \partial_x, \quad e_1 = \frac{1}{x}\partial_{z_1},\quad \cdots \quad e_n = \frac{1}{x}\partial_{z_n} \Big\},
\]
where $(z^i)$ is a coordinate chart on $Z$. The dual bundle of $\prescript{w}{}{TX}$ is the \textbf{wedge cotangent bundle}~$\prescript{w}{}{T^*X}$, whose local coframe is
\[
    \{ \theta^0 = dx, \quad \theta^1 = x dz^{1}, \quad \cdots, \quad \theta^n = x dz^n \}, 
\]
and hence may be written as $\prescript{w}{}{T^{*}X} = \operatorname{span}\{dx\} \oplus x\, T^{*}Z$. For a detailed discussion of the wedge tangent and cotangent bundles, as well as the wedge complex structures and wedge Dirac operators that we develop next, we refer the reader to \cite{Albin_2016_Index, jayasinghe2025SUSY}.

We call a closed, anti-symmetric, non-degenerate bi-linear wedge 2-form $\omega$ a \textbf{wedge symplectic form}. A section $J$ of the endomorphism bundle of $\prescript{w}{}{T^*X}$ satisfying $J^2 = -\Id$ is called a \textbf{wedge almost complex structure}. If, in addition, $J$ satisfies $\nabla J = 0$, then $J$ is said to be \emph{integrable}, and it defines a \emph{wedge complex structure} on $X$.

A \textbf{wedge Clifford module} consists of a complex vector bundle $E \to X$, a Hermitian bundle metric $g_E$, a connection $\nabla^E$ compatible with $g_E$, and a \emph{Clifford multiplication} given by a bundle homomorphism
\[
    c\ell: \mathbb{C}l_w(X) = \mathbb{C} \otimes Cl(\prescript{w}{}{T^*X}; g)\to \mathrm{End}(E)
\]
such that 
\[
    c\ell(\theta)c\ell(\eta)+c\ell(\eta)c\ell(\theta) = -2g(\theta,\eta)\,\Id_E,
\]
and for every $W \in \prescript{w}{}{TX}$ and $\theta \in \prescript{w}{}{T^*X}$ the following compatibility conditions hold:
\[
    g_E(c\ell(\theta)\cdot, \cdot ) = -g_E(\cdot, c\ell(\theta) \cdot ) \quad \text{and} \quad [\nabla^E_W, c\ell(\theta)] = c\ell(\nabla^E_W \theta).
\]
A \textbf{Dirac-type operator} $D$ associated to a Hermitian bundle $E$ is defined by the composition
\[C_c^\infty(\mathring{X}; E) \xlongrightarrow{\nabla^E} C_c^\infty(\mathring{X}; T^*X \otimes E) \xlongrightarrow{c\ell} C_c^\infty(\mathring{X}; E).\] 
Writing $D$ in an orthonormal frame adapted to the conic geometry near the boundary, one obtains the factorization
\begin{equation} \label{eq:Dirac-factorization-1}
    D = \sum_{i=0}^n c\ell(\theta^i) \nabla^E_{e_i}  = c\ell(dx)\left(\nabla^E_{\partial_x} + A\right),
\end{equation}
The operator $A$ is the tangential (twisted) Dirac operator on the link $Z$ that governs the asymptotic behavior of $D$ near the cone tip. Let $J$ denote the complex structure on the cone $X$, defined by $J(\partial_x) = \frac{1}{x}R, J(dx) = x\alpha$. When $C(Z)$ is Kähler, the link $Z$ carries a Sasakian structure, with Reeb vector field $R$ and contact 1-form $\alpha$.

We decompose the vector field $\partial_x$ into its $(0,1)$- and $(1,0)$-components as 
\begin{equation}
    \beta^\# = \frac{1}{2} \left( \partial_x + \sqrt{-1} J \partial_x \right), \quad \overline{\beta}^\# = \frac{1}{2} \left( \partial_x - \sqrt{-1} J \partial_x \right),
\end{equation}
with dual 1-forms given by $\beta = dx - \sqrt{-1} x \alpha$ and  $\overline{\beta} = dx + \sqrt{-1} x \alpha$.

We adopt the Clifford action conventions of \cite[Equation (3.3)]{wu1998equivariant}, under which the Clifford multiplications by $\overline{\beta}^\#$ and $\beta^\#$ are given by
\begin{equation} \label{Clifford mutiplication}
    c\ell(\overline{\beta}^\#) = \sqrt{2} \beta \wedge, \quad c\ell(\beta^\#) = -\sqrt{2} \iota_{\beta^\#}.
\end{equation}
We define the Clifford element
\begin{equation} 
    \nu := \sqrt{2} \left( \beta \wedge - \iota_{\beta^\#} \right).
\end{equation}
Note that $\nu^2 = -2$. The the Dirac operator \eqref{eq:Dirac-factorization-1} can be written as 
\begin{equation} \label{eq:Dirac-factorization-2}
    D = \nu \circ \left( \nabla^E_{\partial_x} -\frac{\nu}{2x} \circ (D_1 + D_2) \right) = \nu \circ \left( \nabla^E_{\partial_x} + A \right),
\end{equation}
where
\[
    D_1 = -  \sqrt{2}\left( \beta \wedge  +\iota_{\beta^\#} \right) \nabla^E_{\sqrt{-1}R},
\]
the operator $D_2$ is the transversal Dirac operator on the link, and $A$ is given by
\begin{equation} \label{equation_B_expansion}
    A = -\frac{\nu}{2x} \circ (D_1 + D_2)
    = -\frac{1}{x}\left( \beta \wedge - \iota_{\beta^\#} \right) \circ \left( 
    \left( \beta \wedge + \iota_{\beta^\#} \right) \nabla^E_{\sqrt{-1}R} 
    + \frac{1}{\sqrt{2}} D_2 \right).
\end{equation}
Moreover, we will define the operators
\begin{equation} \label{equation_operators_123}
    A_{1} = -\frac{\nu}{2x}D_{1} \mbox{ \, and \, } A_{2} = -\frac{\nu}{2x}D_{2}
\end{equation}
so that $A = A_{1}+A_{2}$.

In the Dolbeault case, the even/odd degree complex spinor bundles $F^{\pm}$ admit direct sum decompositions
\begin{equation} \label{equation_Dolbeault_decomposition}
    F^{\pm}=G^{\pm} \oplus \beta \wedge G^{\mp}
\end{equation}
where $G^{\pm}$ are the spinor bundles associated with the transversal Dirac operator. Concretely, their sections take values in the $(0,q)$ projection of the bundle $\Lambda^*(TZ/T\mathcal{F}_{R})$, with $\mathcal{F}_R$ denoting the foliation given by integrating the vector field $R$.

\begin{remark}[anti-commutation] \label{Remark_sign_switch_clifford_eigs_1} 
    The Clifford element $\nu$ anti-commutes with the transversal Dirac operator $D_Z$ since $\nu$ acts by right Clifford multiplication. In particular, $\nu$ anti-commutes with $D_2$ since $\beta$ is orthogonal to the directions appearing in the local Clifford description of~$D_2$~and that $\nu$ anti-commutes with $\beta \wedge + \iota_{\beta^\#}$. 
    
    As a consequence, $\nu$ intertwines the summands of Equation \eqref{equation_Dolbeault_decomposition} and interchanges the~$F^{\pm}$~bundles. 
\end{remark}

\subsection{Twisted Dolbeault complexes and domains}

We now introduce the framework of twisted Dolbeault complexes on conic spaces and describe how the Gelfand–Robbin quotient characterizes the admissible domain extensions of the associated Dirac operator.

Let $X = C(Z)$ denote an infinite cone, equipped with the wedge metric $g = dx^2 + x^2 g_Z$ as in previous subsection. Consider the case where there is a complex structure $J$ on the interior of the cone which extends to a wedge complex structure. Fix a Hermitian line bundle $E \to X$~endowed with a unitary connection $\nabla^E$. We denote the twisted Dolbeault operator $P = \overline{\partial}_E$ in the usual manner on the smooth twisted anti-holomorphic forms on $X$. This yields a \textit{pre-Hilbert complex}~$(H_\bullet,P_\bullet)$, which, together with a domain $\mathcal{D}(P_{\bullet})$ for $P_\bullet$ that is closed under the graph norm, determines a Hilbert complex
\[
    \mathcal{P} = (H_\bullet, \mathcal{D}(P_{\bullet}), P_\bullet), 
    \qquad
    0 \rightarrow \mathcal{D}(P_0) \xlongrightarrow{P_0} \mathcal{D}(P_1) \xlongrightarrow{P_1} \cdots \xlongrightarrow{P_{n-1}} \mathcal{D}(P_n) \rightarrow 0,
\]
where $H_\bullet = L^2\Omega^{0,\bullet}(X;E)$. Every Hilbert complex $\mathcal{P}$ admits an \emph{adjoint Hilbert complex} 
\begin{multline*}
    \mathcal{P}^* = (H_{n-\bullet}, \mathcal{D}^*(P_{n-\bullet})=\mathcal{D}(P^*_{n-\bullet}), P^*_{n-\bullet}), \\
    0 \rightarrow \mathcal{D}(P_{n-1}^*) \xlongrightarrow{P_{n-1}^*} \mathcal{D}(P_{n-2}^*) \xlongrightarrow{P_{n-2}^*} \cdots \xlongrightarrow{P_1^*} \mathcal{D}(P_0^*) \rightarrow 0,
\end{multline*}
where each $P_k^*$ denotes the Hilbert space adjoint of $P_k$. The \textbf{(twisted) Dirac-type operator} associated with the Dolbeault complex is $D = P + P^*$.

We start with picking self-adjoint extensions on infinite cones. Since the incompleteness of the metric is near the singularities (as opposed to the radial infinity), the choices of self-adjoint extensions corresponds to choices of local data near the singularities. We recall the standard maximal and minimal domains of differential operators $Q$ are given by
\begin{align}
    \mathcal{D}_{\max}(Q) &= \{ u \in L^2\Omega^{0,\bullet}(X;E) \mid Qu \in L^2\Omega^{0,\bullet}(X;E) \text{ in the distributional sense}\},\\
    \mathcal{D}_{\min}(Q) &= \text{graph closure of } C_c^\infty\Omega^{0,\bullet}(\mathring{X};E).
\end{align}
It is easy to observe that $\mathcal{D}_{\min}(Q) \subseteq \mathcal{D}_{\max}(Q)$. The \textbf{Gelfand–Robbin quotient} is defined to be
\begin{equation}
    \mathcal{GR}(Q) = \mathcal{D}_{\max}(Q) \big/ \mathcal{D}_{\min}(Q).
\end{equation}

In the case of an infinite cone over a smooth manifold $Z$ (i.e. the metric closure of $(0,\infty)_x \times Z$ with respect to the wedge metric $g=dx^2+x^2g_Z$), $\mathcal{GR}(D)$ for Dirac-type operators $D$ are finite-dimensional. These have been studied, for instance in \cite{mooers1999heat,Bei_2012_L2atiyahbottlefschetz,Albin_2017_index,jayasinghe2024holomorphic}, where the spaces can be understood in terms of the small eigenvalue eigensections of the induced operator $A$ on $Z$. We will provide a concrete example (see Example \ref{Example_Complex_cone_6pi}). The choices of domains correspond to what are known as \textit{ideal boundary conditions}, similar to boundary conditions on spaces with boundary.

For any \textbf{symmetric operator} $Q$, the space $\mathcal{GR}(Q)$ carries a natural symplectic form induced by the Green–Stokes identity.  
For $[u],[v] \in \mathcal{GR}(Q)$, define
\begin{equation} \label{eqn: green-stokes}
    \omega([u],[v]) = \langle Q u, v\rangle_{L^2} - \langle u, Q^* v\rangle_{L^2} 
    = \langle i\sigma_1(Q)(-dx) u|_Z,\, v|_Z \rangle_{L^2(Z)},
\end{equation}
where $\sigma_1(Q)(-dx)$ denotes the principal symbol of $Q$ evaluated at the dual form $-dx$ of the outward pointing normal vector. The anti-symmetry is apparent since $Q=Q^*$. A choice of closed Lagrangian subspace $\mathfrak{L} \subseteq \mathcal{GR}(Q)$ specifies a self-adjoint extension of $Q$, with domain
\[
    \mathcal{D}_\mathfrak{L}(Q) = \{ u \in \mathcal{D}_{\max}(Q) \mid [u] \in \mathfrak{L} \}.
\]
The requirement that $\mathfrak{L} \subseteq \mathcal{GR}(Q)$ be Lagrangian is precisely the condition that the Green–Stokes pairing vanish identically on $\mathfrak{L}$, i.e. $\omega([u],[v]) = 0$, for all $[u],[v] \in \mathfrak{L}$.

While twisted Dolbeault operators $P=\overline{\partial}_E$ are not symmetric, the choices of domains still correspond to subspaces of $\mathcal{GR}(P)$. We define
\[
    \mathcal{H}(P) = (\ker P_{\max} \cap \ker P^*_{\min}) \cap \mathcal{D}_{\min}(P)^{\perp_{L^2}}.
\]
Note that $\mathcal{H}(P)$ can be identified with the cohomology group of the \textit{Gelfand–Robbin complex}
\[
    0 \rightarrow \mathcal{GR}(P_0) \xlongrightarrow{\widetilde{P}_0} \mathcal{GR}(P_1) \xlongrightarrow{\widetilde{P}_1} \cdots \xlongrightarrow{\widetilde{P}_{n-1}} \mathcal{GR}(P_n) \rightarrow 0,
\]
where $\widetilde{P}_k: [u] \mapsto [P_k u]$. For this reason, we refer to $\mathcal{H}(P)$ as the \textit{cohomology quotient}. We choose a subspace ${W} \subseteq \mathcal{H}(P)$ and call such a choice a \textit{cohomological pre-domain}. We define a topological subspace
\[
    \mc{D}_{W}'(P) := W \oplus \mc{D}_{\min}(P) \subseteq L^{2}\Omega^{0,\bullet}(X;E).
\]
Since the adjoint of a domain is closed, we see that $\mathcal{D}_W(P):=(\mathcal{D}_{W}'(P))^{**}$ is a closed domain for $P$. The adjoint domain is given by $\mathcal{D}_{W^{*}}(P^*):=(\mathcal{D}_{W}'(P))^{*}=(\mathcal{D}_W(P))^{*}$, and we note that $\mathcal{D}_{W^*}(P^*)$ can be constructed by an analogous procedure as in the case of $P$ by choosing a subspace $W^* \subseteq \mathcal{H}(P^*)$. Identifying $\mathcal H(P)$ with a cohomology group of the Gelfand-Robbin complex of $P^*$, each $v \in \mathcal H(P)$ determines a unique class  $[v] \in \mathcal{GR}(P^*)$. Then we can write~$W^*$~as  
\[
    W^{*} := \{v \in \mathcal{H}(P^*) \mid [v] \in \mathcal{GR}(P^{*}), \omega([u],[v]) = 0 \mbox{ for all } u \in W \},
\]
where $\omega$ is the Green-Stokes pairing defined in Equation \eqref{eqn: green-stokes}.

\subsection{Rolled-up extension and generalized boundary conditions}

We now define a self-adjoint extension of the twisted Dolbeault-Dirac operator $D = P + P^*$ associated with a cohomological pre-domain $W \subset \mathcal H(P)$. We set
\[
    \mathcal D_{\mathfrak L_W}(D) := \mathcal D_W(P) \cap \mathcal D_{W^*}(P^*).
\]
By construction, $\mathcal D_{\mathfrak L_W}(D)$ defines a self-adjoint domain for $D$. We refer to $\mathcal D_{\mathfrak L_W}(D)$ as the \textbf{rolled-up extension} of $D$ corresponding to the domain $\mathcal D_W(P)$.

Any self-adjoint extension of $D$ corresponds to a choice of Lagrangian subspace 
\[
    \mathfrak{L} \subseteq \mathcal{GR}(D) = \mathcal{D}_{\max}(D) / \mathcal{D}_{\min}(D).
\]
A chosen domain of $P$ canonically determines a compatible domain of $D$ and we work with such rolled-up extensions in this article. Later, in Theorem \ref{intro: Theorem_general_domains_xi_invariants} we will explain how Theorem~\ref{Theorem_simpler_invariant_computation} extends to the case of rolled-up extensions; essentially, the extra sections in $W$ contribute additional supertraces.

The following example shows that not every self-adjoint extension of $D$ is rolled-up.
\begin{example}
Consider $C(S^3)$, where the volume of the sphere is much smaller than the volume of the standard round sphere in $\mathbb{C}^2$. Then the Gelfand-Robbin of the Dolbeault operator includes the section 
\begin{equation}
    s=z_1^{-1}z_2^{-1}+d\overline{z_1}\wedge d\overline{z_2}
\end{equation}
which is an even degree spinor. However, $s$ does not fall in the Gelfand-Robbin quotients of either $P$ or $P^*$. So one can pick some Lagrangian of $\mathcal{GR}(D)$ which contains $s$ and it is clear that the corresponding extension of the Dolbeault-Dirac operator is not a rolled-up extension of the Dolbeault operator.
\end{example}

When studying operators on the finite cones corresponding to the metric closure of $(0,1)_x \times Z$, in addition to ideal boundary conditions at $x=0$, we need to pick boundary conditions at $x=1$~in order to have a choice of self-adjoint extension for the operators.

For the Dolbeault complex, there are two canonical choices of boundary conditions at $x=1$, by choosing the minimal and maximal extensions for $P$ for sections localized near the boundary. These are also called the \textbf{generalized Neumann and Dirichlet boundary conditions} for~$P$, given by
\begin{equation}\label{dirichlet-neumann-bdy-conditions}
    \sigma(P^{*})(dx)u|_{x=1} = 0, \hspace{4mm} \sigma(P)(dx)u|_{x=1} = 0
\end{equation}
respectively. We refer the reader to \cite{jayasinghe2023l2} for more details. For the case of $P=\overline{\partial}$, the generalized Neumann condition leads to the $\overline{\partial}$-Neumann condition for the corresponding Laplace-type operator $D^2$.

Thus given a closed extension $\mathcal{D}_W(P)$ on the infinite cone, we study two domains on the truncated cone, denoted by $\mathcal{D}_{W,B}(P)$ where $B=N,D$ denotes the generalized Neumann or Dirichlet condition. We denote the corresponding Hilbert complex by $\mathcal{P}_{W,B}(P)$ (as $\mathcal{P}_{W,B}$ when the operator $P$ is clear by context) and the cohomology groups by
\begin{equation}\label{local-cohomology-groups}
    \mathcal{H}(\mathcal{P}_{W,B}):= \ker D \subseteq \mathcal{D}_{W,B}(P) \cap (\mathcal{D}_{W,B}(P))^*.
\end{equation}

We now illustrate the preceding constructions using an explicit model of the Dolbeault complex on a conical singularity, showing how the inclusion of certain sections in a self-adjoint domain forces the exclusion of others through non-vanishing boundary pairings.

\begin{example}[Dolbeault complex on a complex cone]\label{Example_Complex_cone_6pi}
Let $X = C_x(S^1_{6\pi})$ denote the cone over the circle of length $6\pi$, equipped with the conic metric $g = dx^2 + x^2 d\theta^2$. We endow $X$ with a complex structure $J$ defined by
\[
    J(\partial_x) = \frac{1}{x} \partial_\theta, \quad J(\partial_\theta) = -x \partial_x,
\]
which makes $(X,g,J)$ into a K\"ahler cone. We introduce the \textit{singular complex coordinate} $z := x e^{i\theta}$ with $\theta \in S^1_{6\pi}$ and analyze the Dolbeault complex on $X$.

One can check that the harmonic representatives in the maximal domain but not in the minimal domain are spanned by
\[
    \tau_1 = c_1{z}^{-1/3}, \quad \tau_2 = c_2{z}^{-2/3}, \quad s_1 = c_2\overline{z}^{-2/3} \overline{dz}, \quad s_2 = c_1\overline{z}^{-1/3} \overline{dz}.
\]
where $c_1,c_2$ are the unique constants such that the sections are of unit norm on the cone (with respect to the volume form $-2i(dz \wedge \overline{dz})=dx \wedge xd\theta$). The section $\tau_1 = c_1z^{-1/3}$ is holomorphic in the interior and $L^2$-integrable with respect to the volume form $\mathrm{dvol}_g = dx \wedge x d\theta$. Similarly,~$s_1~=~c_1 \overline{z}^{-2/3} \overline{dz}$ belongs to the null space of $\overline{\partial}^*$ and is $L^2$-integrable.

Applying the Green–Stokes formula to $P = \overline{\partial}^*$ yields the boundary pairing
\begin{equation}
    [s_m,\tau_n]:=\langle \overline{\partial}^* s_m, \tau_n \rangle_E - \langle s_m, \overline{\partial} \tau_n \rangle_E = \int_{0}^{6\pi} g_E( i\sigma_1(\overline{\partial}^*)(dx) s_m, \tau_n) \, x d\theta,
\end{equation}
where $E = \Lambda^a T^*X^{0,1} $ is the wedge anti-holomorphic form bundle and the pairing $g_E$ denotes the Hermitian structure on $(0,1)$-forms. The principal symbol acts as 
\[
    i\sigma_1(\overline{\partial}^*)(dx)(\overline{dz}) =  \iota_{dx^\#|_{T^{0,1}}}(\overline{dz}) = \iota_{e^{-i\theta} \overline{\partial_z}} (\overline{dz}) = e^{-i\theta}.
\]
since $dz=e^{i\theta}(dx+xid\theta)$.

Since 
\begin{equation}
    g_E(\overline{z}^{-2/3} e^{-i\theta}, {z}^{-1/3}) = \overline{z}^{-2/3} e^{-i\theta} \overline{z}^{-1/3}= x^{-2/3} e^{+2i\theta/3}e^{-i\theta} x^{-1/3} e^{+i\theta/3} = x^{-1},
\end{equation}
the boundary pairing $[s_1,\tau_1]$ is given by
\begin{equation} \label{equation_inner_product_0}
    \frac{1}{c_1c_2}[s_1,\tau_1]=\int_0^{6\pi} g_E(\overline{z}^{-2/3} e^{-i\theta}, {z}^{-1/3}) \, x d\theta = \int_0^{6\pi} 1\, d\theta =6\pi \neq 0.
\end{equation}
Hence the boundary pairing does not vanish, indicating that $\tau_1$ and $s_1$ cannot simultaneously belong to a self-adjoint domain of $\overline{\partial} + \overline{\partial}^*$. One can similarly check that
\begin{equation}
    \frac{1}{c_1c_2}[s_1,\tau_1]= \frac{1}{c_2c_1}[s_2,\tau_2]=6\pi, \quad [s_1,\tau_2]=[s_2,\tau_1]=0.
\end{equation}
\end{example}

The choices of domain for the Dolbeault operators at the level of cohomology corresponds to a choice of vector subspace of $\mathcal{H}(\overline{\partial})=\text{span} \{ \tau_1, \tau_2\}$, which then gives a unique rolled-up extension for $D$ that corresponds to a Lagrangian subspace of the boundary pairing for $D$.

For instance consider the pre-domain at the level of cohomology 
\[
    W_{a,b}=\text{span}\{a\tau_1+b\tau_2\} \subseteq \mathcal{H}(P),
\]
for some $a,b \in \mathbb{R}^+$ such that $
(a^2+b^2)=1$. Then if the corresponding Lagrangian subspaces corresponding to the rolled-up extensions for $D$ are $\mathcal{L}_{a,b}$, the intersection of which with the kernel of the Dirac operator $D$ corresponds to $\text{span}\{a\tau_1+b\tau_2, bs_1-as_2\}$. The corresponding pre-domain at the level of cohomology for the $P^*$ operator is the span of $bs_1-as_2$.

In the example above, the last choice of domain we selected for the Dolbeault–Dirac operator does not arise from an algebraic domain determined by a weight, as studied in \cite{jayasinghe2024holomorphic} (or as an algebraic domain as studied in \cite{Vertman2015Heatalgebraic}). Later in Example \ref{example_Lefschetz_traces_0}, we will compute equivariant~$\xi_T$~invariants for the above domain.
\section{Main results} \label{section_main_results}

\subsection{Review of $\eta$ and $\xi$ invariants}

We review  definitions of equivariant $\eta$ invariants for geometric endomorphisms $T$ on $Z$, that correspond to geometric endomorphisms $T$ on $C_x(Z)$. For the case of a self-map $f_g$ (for some $g \in G$) induced by an isometric action of a Lie group $G$ that fixes only the cone point (which in particular would preserve the radial distance $x$ on the cone), where the action preserves the complex structure, then the geometric endomorphism on the Dolbeault complex is given by~$f^*_g$~(the pullback by the self-map) which commutes with the operator on the given domains. We refer to \cite{AtiyahBott1,Bei_2012_L2atiyahbottlefschetz,jayasinghe2023l2} for more details on more general geometric endomorphisms of Dolbeault complexes.

\textit{In this article we shall study geometric endomorphisms $T$ that are generated by actions of Lie groups that act by isometries and preserve the complex structure, while fixing only the cone point.} Let there be a geometric endomorphism $T=f^*_g$ on the Dolbeault complex that acts on the radial function $x$ by the identity morphism, and thus also acts on the eigensections of the Dirac operator on the link $Z$. Then there is an adjoint endomorphism $T^{*}$ acts on the dual complex, given by the pullback of sections by $f^*_g$.

We will consider in this article two functions $\xi_{T}(s)$ and $\widetilde{\xi}_{T}(s)$ associated with the operator~$A$~from Equation \eqref{eq:Dirac-factorization-2} and the geometric endomorphism $T$. The first function $\xi_{T}(s)$ is the standard equivariant $\xi$ function associated with the equivariant index defect at the singularity in \cite{weiping1990note}, motivated by the study of the boundary contribution in the equivariant APS index theorem in \cite{donnelly1978eta}. The function $\xi_{T}(s)$ is the sum of two boundary contributions: one coming from the nonzero spectrum of $A$ and one from the nullspace.

The boundary contribution for the equivariant index coming from the nonzero spectrum of~$A$~is the \emph{equivariant $\eta$ function}
\begin{equation}\label{eqn:equiv-eta-fn}
    \eta_T(s,A):=\sum_{\lambda \neq 0} \sign(\lambda) |\lambda|^{-s} \Tr (T|_{E_{\lambda}}), 
\end{equation}
where the sum is over the nonzero eigenvalues $\lambda$ of the operator $A$ acting on the restriction to~$Z$~of the even degree sections of the Dolbeault complex (those with values in $F^+$), and where~$\Tr(T|_{E_{\lambda}})$~denotes the trace over the (finite dimensional) eigenspace for the eigenvalue $\lambda$. The equivariant $\eta$ function is holomorphic on $\Re(s) > -2$, and admits a meromorphic continuation to $\mathbb{C}$ with a regular value at $s = 0$, cf. \cite{weiping1990note}. Thus one may consider the \textbf{equivariant~$\eta$~invariant}, defined as the value $\eta_{T}(0,A)$.

We denote by $\pi:X \rightarrow Z$ the projection to a given link at $x=1$. Given a section $s$ of $F|_{Z}$ ($F=\Lambda^{p,\bullet}(X;E)$), we denote by $\pi^*(s)$ the section of $F$ that is covariantly constant with respect to the $\partial_x$ vectors and satisfies $\pi (\pi^*(s))=s$. Given a section $u$ of $F|_{Z}$, we say that it extends as an $L^2$-bounded section of the truncated cone if $\pi^*(u)$ is $L^2$-bounded in an open neighbourhood of the point $x=0$. We denote by $L^{2}\Ker A$ the elements $u$ of the null space of $A$ that extend to $L^2$-bounded sections $\pi^*u$ on the truncated cone (on both the even and odd degree sections of~$F|_{Z}$). One defines
\begin{equation}
    h_T:=\Str\left(T|_{{L^2 \Ker A}}\right) = \Tr(T|_{L^{2}\Ker A^{+}}) - \Tr(T|_{L^{2}\Ker A^{-}})
\end{equation}
and this is the boundary contribution coming from the nullspace of $A$. The standard definition of the equivariant $\xi$ function is 
\begin{equation}\label{equiv-xi-function}
    \xi_{T}(s):=\frac{1}{2} \Big(\eta_T(s,A)+h_{T} \Big).
\end{equation}
The value $\xi_{T}(0)$ is precisely the equivariant index defect in the equivariant Lefschetz fixed point formula in \cite{weiping1990note}, and in the equivariant APS index theorem \cite{donnelly1978eta}.

\subsection{Complex equivariant $\xi$ function}
\label{subsection_complex_equiv_xi}

For a Hilbert complex $\mathcal{P} = (H_\bullet, \mathcal{D}(P_{\bullet}), P_\bullet)$ on a cone $X = C(Z)$, the Dirac operator $D = P+P^{*}$ admits a factorization
\[
    D = \cl(dx)(\nabla_{\partial_{x}}^{E} + \frac{1}{x}A_{Z})
\]
where $A_{Z} = xA$ is the tangential operator on the link $Z$. The associated tangential Laplace-type operator on the link is defined as $\Delta_{Z} = A_{Z}^{2}$. In particular, $\Delta_{Z}$ is a nonnegative self-adjoint elliptic operator on $Z$, and since $Z$ is compact, the spectrum of $\Delta_{Z}$ is discrete:
\[
    \Spec(\Delta_{Z}) = \{\alpha_{i}\}_{i \geq 0}, \hspace{3mm} 0 \leq \alpha_{0} \leq \alpha_{1} \leq \cdots,
\]
with finite-dimensional eigenspaces. By the spectral theorem we may therefore define
\[
    |A_{Z}| = (A_{Z}^{2})^{1/2} = \sqrt{\Delta_{Z}},
\]
as well as the operators
\[
    e^{-s |A_{z}|}, \hspace{3mm} |A_{Z}|^{-s}, \hspace{3mm} s \in \mathbb{C}
\]
by the functional calculus. The Laplacian $\Delta_{Z}$ acts on sections over the link $Z$, and separation of variables identifies the cone harmonic sections with modes indexed by tangential eigensections of $\Delta_{Z}$. Thus, after choosing an orthonormal basis $\{h_{q,i}\}_{i \geq 0}$ of the local cohomology space~$\mathcal{H}^{q}(\mathcal{P}_{W,B}(X))$ adapted to the tangential spectral decomposition, we can define the induced action of $e^{-s|A_{z}|}$ on local cohomology by
\[
    e^{-s|A_{Z}|}h_{q,i} = e^{-s\sqrt{\alpha_{i}}}h_{q,i}.
\]
With this understood, for a given cohomological predomain $W \subset \mathcal{H}(P)$, we define the \textbf{complex equivariant $\xi$ function} to be
\begin{equation}\label{new-equiv-xi-function}
    \widetilde{\xi}_{T,W}(s):=\frac{1}{2} \Big(\Str \left(T\exp (-s\sqrt{\Delta_Z})|_{\mathcal{H}(\mathcal{P}_{W,N}(X))}\right) + \Str\left(T^*\exp (-s\sqrt{\Delta_Z})|_{\mathcal{H}(\mathcal{P}^*_{W,N}(X))}\right) \Big)
\end{equation}
where $\mathcal{P}^*_{W,N}$ is the adjoint complex of $\mathcal{P}_{W,N}$. The function $\widetilde{\xi}_{T,W}(s)$ is holomorphic on $\Re(s)~>~0$, where the sum over tangential modes converges; moreover it will follow from Theorem \ref{Theorem_simpler_invariant_computation} that~$\widetilde{\xi}_{T,W}(s)$ admits a meromorphic expansion in a neighborhood of $s=0$ with a regular value at $s=0$. The \textbf{complex equivariant $\xi$ invariant} is then defined as the value $\widetilde{\xi}_{T,W}(0)$ at $s=0$. We will drop the subscript $W$ from the notation in the case of the spectral Witt condition.

For geometric endomorphisms $T$ that correspond to group actions on $X$ that only fix the point at $x=0$, the Lefschetz-Riemann-Roch results in \cite{jayasinghe2023l2}, (cf. \cite{jayasinghe2024holomorphic}) show that the equivariant index defect at $x=0$ is given by the evaluation at $s=0$ of either one of
\begin{equation}
    \Str \left(T\exp (-s\sqrt{\Delta_Z})|_{\mathcal{H}(\mathcal{P}_{N}(X))}\right), \quad \Str\left( T^*\exp (-s\sqrt{\Delta_Z})|_{\mathcal{H}(\mathcal{P}_{D}(X))}\right)
\end{equation}
which are equal (at $s=0$) by dualities of the complexes $\mathcal{P}_N(X)$ and $\mathcal{P}_D(X)$. It is easy to see that this extends to an equality at $s=0$ of 
\begin{equation}
    \Str \left(T|{\Delta_Z}|^{-s/2}|_{\mathcal{H}(\mathcal{P}_N(X))}\right), \quad \Str \left(T^*|{\Delta_Z}|^{-s/2}|_{\mathcal{H}(\mathcal{P}_D(X))}\right)
\end{equation}
by changing the weights in the supertraces from $\exp (-s\sqrt{\Delta_Z})$ to $|{\Delta_Z}|^{-s/2}$, using the same techniques as those in  \cite[\S 5.2.5]{jayasinghe2023l2}. The regularization used in our definition of $\widetilde{\xi}_{T,W}(s)$ is analogous to the regularization in \cite[(3.3)]{cvetic2025extradimensionaletainvariantsanomalytheories}, where the classical $\eta$ invariant is given as the limit
\[
\lim_{\epsilon \to 0^{+}} \sum_{k} \exp(-\epsilon|\lambda_{k}|)\,\mathrm{sign}(\lambda_{k})
\]
where $\lambda_{k}$ are the nonzero eigenvalues of the induced Dirac operator on the boundary.

\begin{remark}
\label{Remark_different_renormalizations}
    The definition of $h_{T}$ on manifolds with boundary, and even on spaces with conic singularities, differs slightly depending on choices of domains and conventions in various articles. The definitions of $\eta$ and $\xi$ invariants reflect those differences as well, but the index defect for these different definitions will match in examples.
    
    We also note that sometimes different regularizations are used for the $\eta$ and $\xi$ invariants. For example, see equation (22) of \cite{witten1985global}, and equation (3.3) of the more recent work of \cite{cvetic2025extradimensionaletainvariantsanomalytheories}.
\end{remark}

\subsection{Spectral decomposition theorem}

The equivariant $\eta$ and $\xi$ invariants are signed, renormalized traces over the eigensections of the operator $A$ on the even degree spinor bundle, restricted to the link $Z$. On the other hand, the complex equivariant versions we define in this paper are signed renormalized traces over the harmonic sections of the Dolbeault complex. Thus the spectral data over which the traces are taken give a categorification of the equivariant $\eta$ and $\xi$ invariants as well as their complex versions. Understanding the relationship between the categorifications necessitates identifying in detail a meaningful relationship between the associated spectral data. This is the motivation behind our main result, Theorem \ref{mainthm}, which is a kind of spectral decomposition theorem. Our proof of Theorem \ref{mainthm} hinges on understanding the correspondence between supertraces over local cohomology groups and traces over the eigensections of $A$.

On the cone $X = C(Z)$ the Dolbeault-Dirac operator admits the factorization
\[
    D = \nu \circ(\nabla_{\partial_{x}}^{E} + A),
\]
where $\nu = \sqrt{2}(\beta \wedge - \iota_{\beta^{\#}})$ and $A$ is a first-order operator tangent to the link. We write $A = A_{1} + A_{2}$, with
\[
    A_{1} = -\frac{\nu}{2x}D_{1}, \mbox{ \, and \, } A_{2} = -\frac{\nu}{2x}D_{2},
\]
cf. Equation \eqref{eq:Dirac-factorization-2}. We will use the decomposition of the (even/odd) complex spinor bundles for the Dolbeault-Dirac operator,
\begin{equation}\label{dolbeault-splitting}
    F^{\pm}\big|_{Z}\;=\;G^{\pm}\oplus \beta\wedge G^{\mp},
\end{equation}
where $G^{\pm}$ are the transversal spinor bundles for $D_2$. We also define the restriction to the link at~$x=1$,
\[
    \mathcal{R}: \Omega^{0,q}(C(Z);F) \to \Omega^{0,q}(Z;F|_{Z}), \hspace{3mm} \mathcal{R}(\psi)=\psi\big|_{x=1}.
\]
Moreover, we consider the transverse harmonic subspace on the link
\begin{equation}\label{transverse-harmonic}
    \mathcal{E} := \ker(A_{2})\cap \Omega^{0,q}(Z;F|_Z),
\end{equation}
and note that on $\mathcal{E}$ we have $A = A_{1}|_{x=1} = -\tfrac{\nu}{2}D_{1}$.

Observe that the maps $\beta\wedge$ and $\iota_{\beta^\#}$ restrict to mutually inverse bundle isomorphisms
\[
    \beta\wedge:\ G^\pm \xrightarrow{\ \cong\ } \beta\wedge G^\pm,\hspace{3mm}
    \iota_{\beta^\#}:\ \beta\wedge G^\pm \xrightarrow{\ \cong\ } G^\pm,
\]
which interchange $F^{+}$ and $F^{-}$.

By unraveling the definition of the domains $\mathcal{D}_{W,B}$, and noting that we place the boundary at~$x=1$, one obtains the following. 
\begin{lemma}[Boundary conditions and the $\beta$-splitting] \label{bdy-cond-and-splitting}
    Let $P=\bar\partial_{E}$ and let $B=N,D$ denote the generalized Neumann/Dirichlet boundary conditions at $x=1$, as in Equation \eqref{dirichlet-neumann-bdy-conditions}. Take any smooth section $\psi \in \mathcal{D}_{W,B}(P) \cap \Omega^{0,q}(C(Z);F)$, so that the restriction $\mathcal{R}(\psi)$ makes sense. Then, with respect to the orthogonal splitting \eqref{dolbeault-splitting}, the boundary conditions can be equivalently formulated as follows.
    \begin{enumerate}[(i)]
        \item One has $\psi \in \mathcal{D}_{W,N}(P)$ if and only if $\sigma(P^{*})(dx)\mathcal{R}(\psi) = 0$, if and only if $\iota_{\beta^{\#}}\mathcal{R}(\psi) = 0$. Hence in this case $\mathcal{R}(\psi) \perp \beta \wedge G$ and $\mathcal{R}(\psi) \in G^{+}\oplus G^{-}$.
        
        \item One has $\psi \in \mathcal{D}_{W,D}(P)$ if and only if $\sigma(P)(dx)\mathcal{R}(\psi) = 0$, if and only if $\beta \wedge \mathcal{R}(\psi) = 0$. Hence in this case
        \[
            \mathcal{R}(\psi) \in \beta\wedge G^{-} \mbox{ if } \psi\in F^+, \hspace{3mm} 
            \mathcal{R}(\psi) \in \beta\wedge G^{+} \mbox{ if } \psi\in F^- .
        \]
    \end{enumerate}
\end{lemma}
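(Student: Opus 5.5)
The plan is to reduce both statements to a pointwise computation of principal symbols at the boundary hypersurface $\{x=1\}$, and then read off the conclusions from the splitting \eqref{dolbeault-splitting}.

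First I compute the symbols. For $P=\bar\partial_E$ one has $\sigma(P)(\xi)=c\,\xi^{0,1}\wedge$ for some nonzero constant $c$, and $\sigma(P^*)(\xi)=c'\,\iota_{(\xi^{0,1})^\#}$ for some nonzero constant $c'$; the precise constants depend on the normalisation of $i$. At $\xi=dx$, the $(0,1)$-part of $dx$ is the form dual to $\bar\beta^\#$. In the conventions \eqref{Clifford mutiplication} this is represented by $\beta$ up to the factor $\tfrac12$ coming from $dx=\tfrac12(\beta+\bar\beta)$. Hence
\[
\sigma(P)(dx)=c_1\,\beta\wedge, \qquad \sigma(P^*)(dx)=c_2\,\iota_{\beta^\#},
\]
with $c_1,c_2\neq0$. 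This is consistent with the Clifford action $c\ell(dx)=\tfrac{1}{\sqrt2}\nu=\beta\wedge-\iota_{\beta^\#}$, whose wedge part is $\sigma(P)$ and whose contraction part is $-\sigma(P^*)$. I would double-check it against Example \ref{Example_Complex_cone_6pi}, where $i\sigma_1(\bar\partial^*)(dx)(\overline{dz})=e^{-i\theta}$, i.e.\ contraction with $\beta^\#$.

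Next I identify the boundary conditions. The conditions \eqref{dirichlet-neumann-bdy-conditions} are imposed only at $x=1$. For smooth $\psi$, membership in $\mathcal D_{W,B}(P)$ near $x=1$ is determined by the Green--Stokes pairing \eqref{eqn: green-stokes}, which localises to $x=1$. The data $W$ at $x=0$ plays no role, because the boundary conditions at the two ends decouple by a cutoff argument. Hence $\psi\in\mathcal D_{W,N}$ is equivalent to $\sigma(P^*)(dx)\mathcal R(\psi)=0$, and $\psi\in\mathcal D_{W,D}$ is equivalent to $\sigma(P)(dx)\mathcal R(\psi)=0$. This is the definition, possibly after one integration by parts if $N$ is defined as the maximal extension. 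By the symbol computation these read $\iota_{\beta^\#}\mathcal R(\psi)=0$ and $\beta\wedge\mathcal R(\psi)=0$, respectively.

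Finally I use the linear algebra of the splitting. Write $\mathcal R(\psi)=g+\beta\wedge h$ with $g,h$ valued in $G$, so that $G$ consists of forms with no $\beta$ factor. Since $\iota_{\beta^\#}g=0$, we get $\iota_{\beta^\#}(\beta\wedge h)=|\beta|^2h$ with $|\beta|^2\neq0$, and $\beta\wedge\beta=0$. It follows that $\iota_{\beta^\#}\mathcal R(\psi)=|\beta|^2h$ and $\beta\wedge\mathcal R(\psi)=\beta\wedge g$. The map $\beta\wedge$ is injective on $G$, since it is an isomorphism $G^\pm\cong\beta\wedge G^\pm$. Therefore:
\begin{enumerate}[(i)]
\item the Neumann condition holds if and only if $h=0$, that is, $\mathcal R(\psi)\in G^+\oplus G^-$ and $\mathcal R(\psi)\perp\beta\wedge G$;
\item the Dirichlet condition holds if and only if $g=0$, that is, $\mathcal R(\psi)\in\beta\wedge G$.
\end{enumerate}
The parity statement in (ii) then follows from \eqref{dolbeault-splitting}: $F^+\cap\beta\wedge G=\beta\wedge G^-$, and $F^-\cap\beta\wedge G=\beta\wedge G^+$.

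The main obstacle is the first step, namely pinning down that the $(0,1)$-part of $dx$ acts by $\beta\wedge$ rather than $\bar\beta\wedge$, given the paper's unusual naming, in which $\beta$ is dual to the $(1,0)$-vector $\beta^\#$. I would resolve this by matching against \eqref{Clifford mutiplication} and the explicit symbol in Example \ref{Example_Complex_cone_6pi}. In any case the constants are irrelevant, since only the kernels enter the argument.
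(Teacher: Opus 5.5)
Your proposal is correct and is essentially the paper's argument, which the paper compresses into ``unraveling the definition''. The first equivalence in each part is the definition \eqref{dirichlet-neumann-bdy-conditions}, the second is the identification $\sigma(P)(dx)\propto\beta\wedge$ and $\sigma(P^*)(dx)\propto\iota_{\beta^\#}$, and the conclusions follow from $G=\ker\iota_{\beta^\#}$ and $\beta\wedge G=\ker(\beta\wedge)$ inside $F^\pm=G^\pm\oplus\beta\wedge G^\mp$.

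The only slip is in your last paragraph. In the paper's conventions $\beta^\#=\tfrac12(\partial_x+\sqrt{-1}J\partial_x)$ is the $(0,1)$-component of $\partial_x$ (compare $e^{-i\theta}\overline{\partial_z}$ in Example \ref{Example_Complex_cone_6pi}), not a $(1,0)$-vector, and $\beta$ is a $(0,1)$-form with $\beta(\beta^\#)=1$. So $(dx)^{0,1}=\tfrac12\beta$, exactly as your first step already uses, and the ``obstacle'' you flag does not arise.
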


Lemma \ref{bdy-cond-and-splitting} applies, in particular, to the harmonic sections $\psi \in \mathcal{H}(\mathcal{P}_{B}(C(Z)))$ used in the proof of Theorem \ref{mainthm}. We will also use the following lemma which may be derived from a direct calculation.
\begin{lemma}\label{A1-operator-lemma}
    On $\mathcal{E}$, the tangential operator $A_{1}$ anti-commutes with $\beta\wedge$ and with $\iota_{\beta^\#}$:
    \[
        A_{1}\circ(\beta\wedge)=-(\beta\wedge)\circ A_{1},\hspace{3mm}
        A_{1}\circ\iota_{\beta^\#}= -\iota_{\beta^\#}\circ A_{1},
    \]
    hence applying $\beta\wedge$ or $\iota_{\beta^\#}$ flips the sign of the $A_{1}$–eigenvalues.
\end{lemma}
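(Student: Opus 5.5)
The plan is a direct computation using the explicit formula for $A_1$ restricted to $\mathcal{E}$. There, by Equation \eqref{equation_B_expansion} at $x=1$ (so $D_2$ drops out),
\[
    A_1 = -\tfrac{\nu}{2}D_1 = \left(\beta\wedge - \iota_{\beta^\#}\right)\circ\left(\beta\wedge + \iota_{\beta^\#}\right)\nabla^E_{\sqrt{-1}R}.
\]
First I would expand the product of the two Clifford-type factors. Using $(\beta\wedge)^2=0$, $(\iota_{\beta^\#})^2=0$, and the normalisation $\iota_{\beta^\#}\beta = |\beta^\#|^2$-type constant (consistent with $\nu^2=-2$, which forces $\beta\wedge\iota_{\beta^\#}+\iota_{\beta^\#}\beta\wedge = \Id$), I obtain
\[
    (\beta\wedge - \iota_{\beta^\#})(\beta\wedge + \iota_{\beta^\#}) = \beta\wedge\iota_{\beta^\#} - \iota_{\beta^\#}\beta\wedge =: \Gamma.
\]
So $A_1 = \Gamma\,\nabla^E_{\sqrt{-1}R}$ on $\mathcal{E}$. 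The zeroth-order factor $\Gamma$ is the grading operator for the splitting $F = G\oplus\beta\wedge G$: it acts by $-1$ on $G$ (where $\iota_{\beta^\#}$ vanishes) and by $+1$ on $\beta\wedge G$.

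Next I would use the fact that $\Gamma$ anticommutes with both $\beta\wedge$ and $\iota_{\beta^\#}$. Indeed, $\beta\wedge$ maps $G$ to $\beta\wedge G$ and kills $\beta\wedge G$, so it intertwines the $\pm1$ eigenspaces of $\Gamma$. Equivalently, $\Gamma\beta\wedge = -\beta\wedge\iota_{\beta^\#}\beta\wedge\cdot$ wait, more precisely $\Gamma\,\beta\wedge = \beta\wedge\iota_{\beta^\#}\beta\wedge = \beta\wedge$ and $\beta\wedge\,\Gamma = -\beta\wedge\iota_{\beta^\#}\beta\wedge = -\beta\wedge$. The same reasoning applies to $\iota_{\beta^\#}$.

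It then remains to show that $\nabla^E_{\sqrt{-1}R}$ commutes with $\beta\wedge$ and $\iota_{\beta^\#}$ on sections over $Z$. This is the step I expect to be the main obstacle. The plan is to use the Kähler/Sasakian structure. Since $J$ is parallel and $\beta = dx - \sqrt{-1}x\alpha$ is built from $dx$ and $J dx$, I need $\nabla_R\, dx$ and $\nabla_R(x\alpha)$ to be multiples of one another that are compatible with type. On a Kähler cone, $\nabla_R\partial_x = \frac1x R$ and $\nabla_R R = -x\partial_x$, so $\nabla_R\beta^\# = \frac{\sqrt{-1}}{x}\,\beta^\#$ up to the normalisation (i.e.\ $R$ acts on $\beta$ by a phase, since $\mathcal{L}_R$ preserves $\alpha$ and $x$). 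The commutators $[\nabla_R,\beta\wedge]=(\nabla_R\beta)\wedge$ and $[\nabla_R,\iota_{\beta^\#}]$ are then scalar multiples $c\,\beta\wedge$ and $-c\,\iota_{\beta^\#}$. If this scalar is nonzero, I would instead work with the Lie derivative / the connection used in defining $D_1$ (which in the paper's conventions is the one for which $\beta$ is $R$-invariant), and absorb the constant shift. If that is not possible, I would verify that the paper's $\nabla^E_{\sqrt{-1}R}$ is the invariant (Lie-type) derivative, so that $\mathcal{L}_R\beta=0$ gives exact commutation.

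Combining these: $A_1\beta\wedge = \Gamma\nabla\beta\wedge = \Gamma\beta\wedge\nabla = -\beta\wedge\Gamma\nabla = -\beta\wedge A_1$, and likewise for $\iota_{\beta^\#}$. The sign flip of eigenvalues is immediate: if $A_1 u=\lambda u$, then $A_1(\beta\wedge u)=-\lambda\,\beta\wedge u$.
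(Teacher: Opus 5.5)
Your first two steps are sound. On $\mathcal{E}$ at $x=1$ you correctly get $A_1=\Gamma\,\nabla^E_{\sqrt{-1}R}$ with $\Gamma=\beta\wedge\iota_{\beta^\#}-\iota_{\beta^\#}\beta\wedge$ (using $\beta(\beta^\#)=1$). $\Gamma$ is $-1$ on $G$ and $+1$ on $\beta\wedge G$, and it anticommutes with $\beta\wedge$ and $\iota_{\beta^\#}$.

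The whole content of the lemma, however, sits in your third step, and that step fails for the connection the paper actually specifies. In Section~\ref{section_wedge_geo}, $\nabla^E$ is a Clifford connection, $[\nabla^E_W,c\ell(\theta)]=c\ell(\nabla_W\theta)$ with $\nabla$ Levi-Civita, and $c\ell(\overline{\beta}^\#)=\sqrt2\,\beta\wedge$. Hence $[\nabla^E_R,\beta\wedge]=(\nabla_R\beta)\wedge$ and $[\nabla^E_R,\iota_{\beta^\#}]=\iota_{\nabla_R\beta^\#}$. On a K\"ahler cone $\nabla R=J$, because $\nabla(x\partial_x)=\Id$ and $\nabla J=0$. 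So $\nabla_R dx=x\alpha$ and $\nabla_R(x\alpha)=-dx$, which give $\nabla_R\beta=\sqrt{-1}\,\beta$ and $\nabla_R\beta^\#=-\sqrt{-1}\,\beta^\#$. The scalar you feared is therefore nonzero, and it carries no factor $1/x$. Feeding this into your own algebra yields
\[
A_1\circ(\beta\wedge)=-(\beta\wedge)\circ A_1-\beta\wedge,\qquad
A_1\circ\iota_{\beta^\#}=-\iota_{\beta^\#}\circ A_1-\iota_{\beta^\#}.
\]
So $\beta\wedge$ sends the $\lambda$-eigenspace to the $(-\lambda-1)$-eigenspace, not to the $(-\lambda)$-eigenspace. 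The disc of Example~\ref{Example_disc} already shows this. There $\beta=e^{\sqrt{-1}\theta}d\bar z$ with $d\bar z$ flat, so $e^{\sqrt{-1}k\theta}$ has $A_1$-eigenvalue $k$, while $e^{\sqrt{-1}k\theta}\beta$ has eigenvalue $-k-1$.

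Neither of your fallbacks repairs this. ``Absorbing the constant shift'' is not a proof step. If $[\nabla^E_{\sqrt{-1}R},\beta\wedge]=c\,\beta\wedge$ with $c\neq0$, the identity you want is false and is replaced by $A_1\beta\wedge=-\beta\wedge A_1+c\,\beta\wedge$.

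Replacing $\nabla^E_R$ by $\mathcal{L}_R$ does give exact commutation, since $\mathcal{L}_R\beta=-\sqrt{-1}\,x\,\iota_R d\alpha=0$. But on $(0,q)$-forms $\nabla_R=\mathcal{L}_R+\sqrt{-1}\,q$, so $\Gamma\,\sqrt{-1}\mathcal{L}_R$ differs from $A_1$ by a degree-dependent zeroth-order term, and you would be proving the lemma for a different operator. Moreover, the claim that the paper's $\nabla^E_{\sqrt{-1}R}$ \emph{is} this $R$-invariant derivative contradicts the Clifford compatibility axiom above.

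The paper itself only says ``direct calculation'', so a complete argument has to make a choice explicit. One option is to define $A_1$ with $\nabla_{\sqrt{-1}R}$ acting on coefficients in the $\beta$-adapted frame of $G\oplus\beta\wedge G$; then your three steps prove the statement for that operator, and you should say so. The other is to keep the paper's connection and prove the corrected statement, which the displayed relations give at once: $A_1+\tfrac12$ anticommutes exactly with both $\beta\wedge$ and $\iota_{\beta^\#}$. With the opposite overall sign of $A_1$ used in \eqref{equation_B_expansion}, the corrected operator is $A_1-\tfrac12$.
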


The following refines parts (1) and (2) of Theorem~\ref{intro: mainthm} by making the correspondence between boundary conditions and spectral data explicit. The rest of Theorem ~\ref{intro: mainthm} follows from Theorem 
\begin{theorem}\label{mainthm}
Consider the twisted Dolbeault complex
\[
    \mathcal{P}_{W,B}=\bigl(L^2\Omega^{0,\bullet}(C(Z);E),\,\mathcal{D}_{W,B},\,P=\bar\partial_{E}\bigr),
    \qquad B\in\{N,D\},
\]
and let $\mathcal{H}^q_B$ denote its space of harmonic sections in degree $q$. Assume in addition the spectral Witt condition holds:
\[
    \Spec(A_{1}|_{x=1} )\cap\left(-\tfrac{1}{2},\tfrac{1}{2}\right)=\varnothing.
\]
Then $\mathcal{H}^q_N\oplus\mathcal{H}^q_D$ admits an orthonormal basis $\{\psi_j\}$ such that each $\phi_j=\mathcal{R}(\psi_j)\in\mathcal{E}$ is an $A$-eigen\-section with $A\phi_j=\mu_j\phi_j$ for some $\mu_j>0$, and we have the following characterization:

\begin{enumerate}
    \item $\psi_j\in \mathcal{H}^{\mathrm{even}}_{N}$ iff $\phi_j\in \mathcal{E}\cap G^{+}$ and $A\phi_j=\mu_j\phi_j$ with $\mu_j>0$.

    \item $\psi_j\in \mathcal{H}^{\mathrm{odd}}_{N}$ iff $\phi_j\in \mathcal{E}\cap G^{-}$ and $A (\beta \wedge \phi_j)=-\mu_j\beta \wedge \phi_j$ with $\mu_j>0$.
    
    \item $\psi_j\in \mathcal{H}^{\mathrm{even}}_{D}$ iff $\phi_j\in \mathcal{E}\cap(\beta\wedge G^{-})$, equivalently $\phi_j=\beta\wedge \Phi_j$ with $\Phi_{j}\in\mathcal{E}\cap G^{-}$, and $A\phi_j=\mu_j\phi_j$ with $\mu_j>0$.

    \item $\psi_j\in \mathcal{H}^{\mathrm{odd}}_{D}$ iff $\phi_j\in \mathcal{E}\cap(\beta\wedge G^{+})$, equivalently $\phi_j=\beta\wedge \Phi_j$ with $\Phi_j \in\mathcal{E}\cap G^{+}$, and $A\Phi_j=-\mu_j\Phi_j$ with $\mu_j>0$.
\end{enumerate}
\end{theorem}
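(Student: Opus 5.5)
We will prove Theorem \ref{mainthm} by separation of variables on the truncated cone. The basic move is to expand a harmonic section in eigensections of the tangential operator and then read off what the boundary condition at $x=1$ forces.

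\textbf{Step 1: separation of variables.} Write $D=\nu\circ(\nabla^E_{\partial_x}+A)$ with $A=\frac1x A_Z$. We decompose $L^2(Z;F|_Z)$ into joint eigenspaces of $A_1|_{x=1}$ and $A_2|_{x=1}$. This is possible because $A_1$ differentiates only along the Reeb direction $R$, which on the Sasakian link commutes with the transversal operator $D_2$. By Remark \ref{Remark_sign_switch_clifford_eigs_1} and Lemma \ref{A1-operator-lemma}, $\nu$, $\beta\wedge$ and $\iota_{\beta^\#}$ intertwine these eigenspaces and flip signs appropriately.

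A harmonic section of the complex $\mathcal P_{W,B}$ is in particular $P$- and $P^*$-closed. Mode by mode it therefore has the form $\psi=x^{-\mu}\phi$ with $A_Z\phi=\mu\phi$. Since $\phi$ depends only on $z$, we have $\mathcal R(\psi)=\phi$.

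\textbf{Step 2: harmonic modes lie in $\mathcal E$.} We use harmonicity for the complex, namely $P\psi=0$ and $P^*\psi=0$ separately, rather than just $D\psi=0$. Decompose $P$ and $P^*$ into their radial/Reeb parts and their transversal parts $\bar\partial_{\mathrm{tr}}$. The transversal part of $\psi$ must then be $\bar\partial_{\mathrm{tr}}$- and $\bar\partial_{\mathrm{tr}}^*$-closed, which forces $A_2\phi=0$, i.e.\ $\phi\in\mathcal E$.

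A cleaner route, which I would try first, is an integration by parts over the truncated cone. It gives $\|P\psi\|^2+\|P^*\psi\|^2$ as a sum of a radial/Reeb term and $\int x^{-2}\|D_2\psi\|^2$ term, with a boundary term at $x=1$. That boundary term vanishes precisely under the $N$ or $D$ condition of Lemma \ref{bdy-cond-and-splitting}.

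On $\mathcal E$ we have $A=A_1$, so only the $A_1$-spectrum matters. The spectral Witt condition then removes the $W$-dependence and all modes with $|\mu|<\frac12$.

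\textbf{Step 3: sign bookkeeping.} Square-integrability near $x=0$ of $x^{-\mu}\phi$ in the measure $x^{n}dx$ (with the half-density normalisation implicit in $D$) requires $\mu<\frac12$ in the normalised convention. Combined with the spectral Witt condition, this means the allowed solutions have a definite sign. Fix the sign convention so that the surviving modes are those with $\mu_j>0$ in the convention of the statement.

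Next impose the boundary condition via Lemma \ref{bdy-cond-and-splitting}:
\begin{enumerate}[(i)]
\item Neumann forces $\phi\in G^\pm$. Even degree gives (1) directly.
\item For odd degree, $\phi\in G^-$. Lemma \ref{A1-operator-lemma} shows that $\beta\wedge\phi$, an even spinor, has eigenvalue $-\mu_j$, which is (2).
\item Dirichlet forces $\phi=\beta\wedge\Phi$. Again $\beta\wedge$ flips the $A_1$-eigenvalue, which yields (3) and (4).
\end{enumerate}

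Conversely, given such $\phi$, the section $\psi=x^{-\mu}\phi$ (after reconciling the exponent sign) is checked directly to satisfy $P\psi=P^*\psi=0$ and the boundary condition. This gives the "iff". Orthonormality follows from orthogonality of distinct eigenspaces and of $G$ versus $\beta\wedge G$; then normalise.

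\textbf{Main obstacle.} The main obstacle is Step 2 together with the exponent/sign reconciliation. One must show that harmonicity for $P$ and $P^*$ separately, and not merely for $D$, kills the $A_2$-nontrivial modes. The difficulty is that $A_1$ and $A_2$ mix under $\nu$, and the factor $-\frac{\nu}{2x}$ produces cross terms. I would handle this by an explicit computation of $P$ and $P^*$ in the frame $\{\beta,\bar\beta, \text{transversal}\}$, using the Kähler/Sasakian identities to show that the cross terms cancel.
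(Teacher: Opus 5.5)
Your overall route is the paper's. You separate variables, solve $\partial_x u+\lambda x^{-1}u=0$ mode by mode, and use square-integrability at $x=0$ together with the spectral Witt condition to fix the sign of the exponent. You then sort boundary values with Lemma~\ref{bdy-cond-and-splitting} and pass between $\phi$ and $\beta\wedge\phi$ with Lemma~\ref{A1-operator-lemma}.

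The difference is your Step~2. The paper takes as its starting point that $\mathcal{R}$ maps $\mathcal{H}^q_N\oplus\mathcal{H}^q_D$ into $\mathcal{E}$, and does not argue it. You try to prove it, and the mechanism you propose does not work.

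A first symptom is already in Step~1. The operator $A_1=-\frac{\nu}{2x}D_1$ is \emph{diagonal} for $G\oplus\beta\wedge G$, since $\nu(\beta\wedge+\iota_{\beta^\#})$ acts as $\mp\sqrt2$ on $G$ and on $\beta\wedge G$ respectively. By contrast, $A_2=-\frac{\nu}{2x}D_2$ is \emph{off-diagonal} by Remark~\ref{Remark_sign_switch_clifford_eigs_1}. So $A_1$ and $A_2$ do not commute; modulo zero-order terms they anticommute. There is therefore no joint eigenbasis off $\mathcal{E}$. What commutes with $D_2$ is $\nabla_R$, not $A_1$.

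The genuine gap is the claim that $P\psi=0=P^*\psi$ forces the transversal part to be $\bar\partial_{\mathrm{tr}}$- and $\bar\partial^*_{\mathrm{tr}}$-closed. Write $\psi=\psi_0+\beta\wedge\psi_1$ with $\psi_0,\psi_1$ transversal. The $\beta$-component of $P\psi=0$ says $\bar\partial_{\mathrm{tr}}\psi_1$ equals a radial/Reeb derivative of $\psi_0$. The $G$-component of $P^*\psi=0$ says $\bar\partial^*_{\mathrm{tr}}\psi_0$ equals a radial/Reeb derivative of $\psi_1$. Neither need vanish.

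A concrete counterexample is $\psi=d\bar z_1$ on $\mathbb{C}^2=C(S^3)$. It is $\bar\partial$- and $\bar\partial^*$-closed and $L^2$ near $0$. Yet $\mathcal{R}(\psi)=\bar z_1\,\beta+\bar\partial_b\bar z_1$ with $\bar\partial_b\bar z_1\neq 0$, so $\mathcal{R}(\psi)\notin\mathcal{E}$. Hence no interior K\"ahler/Sasakian identity can make your cross terms cancel. The containment has to come from the boundary condition at $x=1$; this $\psi$ violates both the Neumann and the Dirichlet condition.

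Your integration-by-parts variant does not close as stated either. Because of the weight $x^{-1}$, the term $2\Re\langle\partial_x\psi,x^{-1}A_{2,Z}\psi\rangle$ (with $A_{2,Z}=xA_2$) integrates to a boundary term plus a bulk term $\int_0^1x^{-2}\langle A_{2,Z}\psi,\psi\rangle\,dx$ of indefinite sign.

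What does work is a blockwise argument on $\mathcal{E}^\perp$. This space splits into $A$-invariant blocks, adapted to the Reeb weight and the transversal spectrum. On each block $A_2$ couples a $G$-component to a $\beta\wedge G$-component. So every separated solution there has a boundary value with both components nonzero, and fails both conditions of Lemma~\ref{bdy-cond-and-splitting}. You then still need to know that in each block only one separated solution is $L^2$ at $x=0$, so that no combination can meet the boundary condition. This is where a spectral condition on $A$ over $\mathcal{E}^\perp$ (not just on $A_1$) must enter.

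Finally, ``fix the sign convention'' is not a step you are entitled to. With $D=\nu(\nabla_{\partial_x}+A)$, the $L^2$ condition plus spectral Witt forces a negative boundary eigenvalue. The paper's own proof also lands on $A_Z\phi=-\mu\phi$ with $\mu>0$. That sign has to be tracked through Lemma~\ref{A1-operator-lemma}, not chosen.
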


Before proceeding to the proof, illustrate the idea behind Theorem \ref{mainthm} in the case where the link is the unit disk in the complex plane.

\begin{example} \label{Example_disc}
Consider the Dolbeault complex on $\mathbb{C}$. The even degree spinor bundle is just~$G^+$, and we consider the positive eigenvalue eigensections of 
\begin{equation}
    A_1=-\nu \circ D_1|_Z=\sqrt{2}\iota_{\beta} \circ (\beta \wedge \frac{\partial_x +ix\partial_\theta}{\sqrt{2}})|_{x=1}=i\partial_{\theta}
\end{equation}
which are $e^{ik\theta}$ for positive integer values of $k$. These admit unique harmonic extensions $z^k=x^ke^{ik\theta}$, that satisfy 
\begin{equation}
    \Big(\partial_x + i\frac{1}{x}\partial_{\theta}\Big) (x^ke^{ik\theta})=0=(\partial_x-\frac{k}{x})x^k.
\end{equation}
This shows how the positive eigenvalue eigensections of $A$ in the bundle $\mathcal{E}\cap G^{+}$ corresponds to the sections in $\mathcal{H}_N^{even}$. For those with negative integer eigenvalues of $A_1$ in $G^+$, we see that~$\beta e^{ik\theta}=(dx-ixd\theta) e^{ik\theta}$ for negative values of $k$ have positive eigenvalues $-k$ for 
\begin{equation}
    A_1=-\nu \circ D_1|_Z=-\sqrt{2}\beta \wedge \circ (-\iota_{\beta} \frac{\partial_x -ix\partial_\theta}{\sqrt{2}})|_{x=1}=-i\partial_{\theta}
\end{equation}
and that they admit unique harmonic extensions $(dx-ixd\theta) x^{(k-1)}e^{-ik\theta} =\overline{z}^{k-1}d\overline{z}$, that satisfy 
\begin{equation}
    \Big(\partial_x - i\frac{1}{x}\partial_{\theta}\Big) (x^ke^{-ik\theta})=0=(\partial_x-\frac{k}{x})x^k.
\end{equation}
when the bundle is trivialized by the section $d\overline{z}$. If trivialized with the section $\beta=d\overline{z}e^{-i\theta}$, it leads to the equation
\begin{equation}
    \Big(\partial_x+\frac{1}{x} - i\frac{1}{x}\partial_{\theta}\Big) (x^ke^{-i(k+1)\theta})=0.
\end{equation}
This shows how the negative eigenvalue eigensections in $\mathcal{E} \cap G^{+}$ corresponds to the the sections in $\mathcal{H}_D^{odd}$. In this way we see how the eigensections on the links extend to unique elements in the cohomology groups, which showcases the correspondence in Theorem \ref{mainthm}
\end{example}

\begin{proof}[Proof of Theorem \ref{mainthm}]
We have seen in Equation \eqref{transverse-harmonic} that 
\[
    \mathcal{R}: \mathcal{H}^q_N\oplus \mathcal{H}^q_D \to \mathcal{E} = \ker(A_{2})\cap \Omega^{0,q}(Z;F|_Z),
\]
for each $q$. Therefore for $\phi = \mathcal{R}(\psi)$, we have $A \phi = A_1 \phi$. Consider an orthonormal eigenbasis~$\{\phi_{\lambda,k}\}$ of $A$ in $L^2(Z;F|_Z)$, so that $A\phi_{\lambda,k}=\lambda\phi_{\lambda,k}$. By the separation analysis for Dirac operators on cones in \cite[Prop 5.33]{jayasinghe2023l2} (see Remark \ref{Remark_why_call_it_lifted_SUSY}), we may write $\psi(x,\cdot)=\sum_{\lambda,k} u_{\lambda,k}(x)\phi_{\lambda,k}(\cdot)$ and then the equation $D\psi = 0$ decouples modewise into scalar ODEs
\begin{equation}\label{kernel-equation}
    \partial_x u_{\lambda,k}(x)+\frac{\lambda}{x}u_{\lambda,k}(x)=0.
\end{equation}
Hence $u_{\lambda,k}(x)=c_{\lambda,k}x^{-\lambda}$. Equivalently, writing $\lambda=-\mu$ we obtain the form
\[
    u(x)=cx^{\mu} \, \mbox{ when } \, A\phi=-\mu\phi.
\]
The $L^2$-condition near $x=0$ for $\psi$ is simply $\int_{0}^{1}x^{2\mu}\,dx<\infty$, i.e. $\mu>-1/2$.

We now use the Dolbeault structure and the splitting \eqref{dolbeault-splitting}. On $\ker D$, the mode decomposition above shows that $\psi$ is a (possibly infinite, but convergent) linear combination of separated solutions $x^{\mu}\phi$ with $\mu>-1/2$. For $\psi$ to lie in the harmonic space of the Hilbert complex, we require simultaneously $P\psi=0$ and $P^*\psi=0$.

To construct the orthonormal basis $\{\psi_j\}$ for $\mathcal{H}_{N}^{q}\oplus \mathcal{H}_{D}^{q}$ in the statement of the theorem, we proceed mode-by-mode. First, we restrict attention to the tangential harmonic subspace $\mathcal{E}=\ker(A_2)\subset \Gamma(Z;F|_Z)$, and decompose it orthogonally as
\[
    \mathcal{E}=\bigoplus_{\lambda\in\Spec(A)} \mathcal{E}_\lambda,\hspace{3mm} 
    \mathcal{E}_\lambda:=\{\phi\in\mathcal{E}:A\phi=\lambda\phi\}.
\]
For each eigenspace $\mathcal{E}_\lambda$, choose an orthonormal basis $\{\phi_{\lambda,k}\}_k$ in $L^2(Z;F|_Z)$ adapted to the boundary/parity decomposition coming from Lemma \ref{bdy-cond-and-splitting}, i.e. with $\phi_{\lambda,k}$ lying in the appropriate summand $G^\pm$ or $\beta\wedge G^\mp$ according as we are constructing basis elements in $\mathcal{H}^{q}_{N}$ or $\mathcal{H}^{q}_{D}$ and in even or odd degree. For each such $\phi_{\lambda,k}$ with $\lambda=-\mu$ (so that the $L^2$ condition selects $\mu>-1/2$), define a separated solution on $(0,1)\times Z$ by
\[
    \psi_{\lambda,k}(x,z):= c_{\lambda,k} \, x^{\mu} \, \phi_{\lambda,k}(z),
\]
where the normalization constant $c_{\lambda,k}>0$ ensures $\|\psi_{\lambda,k}\|_{L^2}=1$. The modewise ODE analysis above implies that $D\psi_{\lambda,k}=0$ and $\mathcal{R}(\psi_{\lambda,k})=\phi_{\lambda,k}$, and the boundary condition at $x=1$ is satisfied by construction since $\mathcal{R}(\psi_{\lambda,k})$ lies in the correct summand singled out by Lemma \ref{bdy-cond-and-splitting}. Moreover, orthogonality of distinct pairs $(\lambda,k)\neq(\lambda',k')$ follows from separation of variables and orthogonality of the link eigenmodes:
\[
    \langle \psi_{\lambda,k},\psi_{\lambda',k'}\rangle_{L^2(X)}
    =\left(\int_{0}^{1} x^{\mu+\mu'}\,dx\right)\langle \phi_{\lambda,k},\phi_{\lambda',k'}\rangle_{L^2(Z)}=0 \mbox{ if } (\lambda,k)\neq(\lambda',k').
\]
Finally, any $\psi\in\mathcal{H}^q_N\oplus \mathcal{H}^q_D$ admits an expansion into $A$-eigenmodes on the link, and the kernel equation \eqref{kernel-equation} forces each coefficient to satisfy the same ODE, hence $\psi$ is an $L^2$-convergent linear combination of the $\psi_{\lambda,k}$ above. Thus we obtain an orthonormal basis by restricting to the finite subset $\{\psi_{j}\} \subseteq \{\psi_{\lambda,k}\}$ which spans $\mathcal{H}^q_N\oplus \mathcal{H}^q_D$.

The basis $\{\psi_{j}\}$ falls into one of the four categories (1)-(4) by construction. Lemma \ref{bdy-cond-and-splitting} identifies the Neumann and Dirichlet boundary conditions at $x=1$ with the ``no $\beta$-component'' and ``pure~$\beta$-component'' subspaces inside Equation \eqref{dolbeault-splitting}. This yields the four cases in the statement of the theorem:
\[
    \begin{array}{c|c|c}
        \text{parity} & B=N & B=D\\\hline
        \text{even }(F^{+}) & \mathcal{R}(\psi)\in G^{+} & \mathcal{R}(\psi)\in \beta\wedge G^{-}\\
        \text{odd }(F^{-})  & \mathcal{R}(\psi)\in G^{-} & \mathcal{R}(\psi)\in \beta\wedge G^{+}.
    \end{array}
\]
Under the spectral Witt hypothesis $\Spec(A_{Z}) \cap (-\tfrac{1}{2},\tfrac{1}{2}) = \varnothing$, the $L^{2}$-condition $\mu > -1/2$ forces~$\mu > 0$ when $A_{Z}\phi = -\mu \phi$, so every harmonic mode contributing to $\mathcal{H}_{N}^{q}\oplus\mathcal{H}_{D}^{q}$ has $\mu > 0$. Finally, in cases (3)-(4) we may take $\Phi_j=\iota_{\beta^\#}\phi_j$, which satisfies
$A\phi_j=+\mu_j\phi_j$ by Lemma \ref{A1-operator-lemma} with $\mu_{j} > 0$.
This completes the proof of the theorem.
\end{proof}

\begin{remark} \label{Remark_why_call_it_lifted_SUSY}
    It is the product nature of the conic metric on $C(Z)$ that allows for the separation of variables in \cite[Prop 5.33]{jayasinghe2023l2}, which shows that the eigensections of the Laplace-type operator on the link lift to eigensections of the Laplace-type operator on the cone. The supersymmetry of the eigensections of the cone leads to cancellations of contributions to the equivariant indices from the non-zero eigenvalue eigensections on the cone.
\end{remark}

\subsection{Equivariant index defect} \label{Subsection_equiv_ind_def}
Theorem \ref{mainthm} yields a five-part decomposition of the harmonic spinors, using the subspaces corresponding to the cases (1)-(4) plus the complement of the transverse harmonic subspace~$\mathcal{E}$~from Equation \eqref{transverse-harmonic}. We define a threefold orthogonal decomposition
\[
    L^{2}\Omega^{0,q}(Z;F^{+}|_{Z}) = \mathcal{H}_{1}^{q}\oplus \mathcal{H}_{2}^{q}\oplus \mathcal{H}_{3}^{q}
\]
where $\mathcal{H}_{1}^{q} = \mathcal{R}(\mathcal{H}_{N}^{q})$ consists of the spinors on $Z$ in cases (1) and (2) corresponding to $B = N$, and $\mathcal{H}^{q}_{2} = \mathcal{R}(\mathcal{H}^{q}_{D})$ consists of the spinors in cases (3) and (4) corresponding to $B=D$. The remaining subspace $\mathcal{H}_{3}^{q}$ is the complement of the transverse harmonic subspace $\mathcal{E}$, cf. Equation \eqref{transverse-harmonic}, within $L^{2}\Omega^{0,q}(Z;F^{+}|_{Z})$.

This in turn yields a threefold splitting of the $\xi_{T}$ function \eqref{equiv-xi-function}. Let $E_{\lambda} = \ker(A_{Z}-\lambda)$ denote the $\lambda$-eigenspace of $A_{Z}$ on $\Omega^{0,q}(Z;F^{+}|_{Z})$. Since each subspace $\mathcal{H}_{j}^{q}$ is closed and $T$ invariant, we obtain an orthogonal decomposition for each eigenspace
\[
    E_{\lambda} = \bigoplus_{j=1}^{3}(E_{\lambda}\cap \mathcal{H}_{j}^{q}).
\]
The $\eta_{T}$ function \eqref{eqn:equiv-eta-fn} then decomposes as
\begin{align*}
    & \eta_{T}(s) = \eta_{T,1}(s)+\eta_{T,2}(s)+\eta_{T,3}(s) \\
    & \mbox{with \, } \eta_{T,j}(s) = \sum_{j\not= 0}\mathrm{sign}(\lambda)|\lambda|^{-s}\Tr(T|_{E_{\lambda}\cap \mathcal{H}_{j}^{q}}).
\end{align*}
Similarly, the $T$-trace on the nullspace splits as
\[
    \Tr(T|_{\ker A_{Z}}) = h_{T,1} + h_{T,2} + h_{T,3}, \hspace{3mm} h_{T,j} = \Tr(T|_{\ker(A_{Z})\cap\mathcal{H}_{j}^{q}}).
\]
Therefore for the $\xi_{T}$ function we have the decomposition
\begin{equation}\label{threefold-partition 2}
    \xi_{T}(s) = \xi_{T,1}(s) + \xi_{T,2}(s) + \xi_{T,3}(s), \hspace{3mm} \xi_{T,j}(s) = \frac{1}{2}(\eta_{T,j}(s) + h_{T,j}).
\end{equation}
This leads to our next main result, which together with Theorem \ref{mainthm} gives Theorem \ref{intro: mainthm}.

\begin{theorem} \label{Theorem_simpler_invariant_computation}
    In the setting of Theorem \ref{mainthm}, for geometric endomorphisms $T=f^*_g$ where $f^*_g$~has an isolated fixed point at the cone point of $C(Z)$, the equivariant $\xi$ and complex equivariant~$\xi$~invariants coincide:
    \begin{equation}\label{mainthm2-eqn2}
    \widetilde{\xi}_{T}(0) = \xi_{T}(0).
    \end{equation}
    In particular $\mathcal{H}_{3}^{q}$ does not contribute to the $\xi_T$ invariant.
\end{theorem}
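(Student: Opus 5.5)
The plan is to compare both invariants term by term along the threefold splitting \eqref{threefold-partition 2}. We show that $\xi_{T,1}(0)+\xi_{T,2}(0)=\widetilde{\xi}_T(0)$ and that $\xi_{T,3}(0)=0$.

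\emph{Step 1: the sectors $\mathcal H_1^q$ and $\mathcal H_2^q$.} By Theorem \ref{mainthm}, under the spectral Witt condition $\ker A_Z\cap\mathcal E=0$, so $h_{T,1}=h_{T,2}=0$. Moreover $\mathcal R$ gives a $T$-equivariant bijection between the orthonormal basis $\{\psi_j\}$ of $\mathcal H_N\oplus\mathcal H_D$ and eigensections $\phi_j$ with eigenvalue $\pm\mu_j$. Equivariance holds because $T=f_g^*$ preserves $x$ and commutes with $\mathcal R$. Cases (1)--(4) fix both the sign of the eigenvalue in $F^+$ and the parity on the cone:
\begin{itemize}
\item even Neumann modes give $+\mu_j$ in $G^+$;
\item odd Neumann modes give $-\mu_j$ in $\beta\wedge G^-\subset F^+$;
\item even Dirichlet modes give $+\mu_j$ in $\beta\wedge G^-$;
\item odd Dirichlet modes $\beta\wedge\Phi_j$ correspond to $\Phi_j\in G^+$ with eigenvalue $-\mu_j$.
\end{itemize}
Hence $\operatorname{sign}(\lambda)$ equals the cone parity sign $(-1)^{\deg}$. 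Since $|\lambda|=\sqrt{\Delta_Z}$ on these modes, we get
\[
\eta_{T,1}(s)+\eta_{T,2}(s)=\Str\big(T|\Delta_Z|^{-s/2}|_{\mathcal H(\mathcal P_N)}\big)+\Str\big(T^*|\Delta_Z|^{-s/2}|_{\mathcal H(\mathcal P_D)}\big).
\]
Here the Dirichlet complex is identified with the adjoint complex $\mathcal P_N^*$, with $T^*$ acting. Comparing with \eqref{new-equiv-xi-function}, and using the remark after it that the exponential and power regularizations agree at $s=0$ (via the technique of \cite[\S5.2.5]{jayasinghe2023l2}), this yields $\xi_{T,1}(0)+\xi_{T,2}(0)=\widetilde\xi_T(0)$. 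We will need to check the degree bookkeeping, since the adjoint complex reverses degree, so that the signs match exactly rather than up to an overall sign.

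\emph{Step 2: the sector $\mathcal H_3^q$, which is the main obstacle.} On $\mathcal E^\perp$ the transversal operator $A_2$ is invertible. By Remark \ref{Remark_sign_switch_clifford_eigs_1}, $\nu$ anticommutes with $D_2$ and interchanges the summands. We use this to build a $T$-commuting involution-like map on $\mathcal E^\perp\cap F^+$ pairing eigenvalues $\lambda\leftrightarrow-\lambda$. The natural candidate is a normalized $D_2/|D_2|$ composed with $\nu$ or with $\beta\wedge+\iota_{\beta^\#}$. However, $A_1$ and $A_2$ need not commute nicely with it, so the pairing is only of the lifted kind: it holds for the cone Laplacian on nonzero-eigenvalue modes, as in Remark \ref{Remark_why_call_it_lifted_SUSY}, not for $A$ itself. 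This is exactly why $\xi_{T,3}(s)$ need not vanish for general $s$.

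We therefore avoid proving the spectral pairing and argue indirectly. We take the known Lefschetz formula: $\xi_T(0)$ is the equivariant index defect at the cone point \cite{weiping1990note}. By \cite{jayasinghe2023l2}, the same defect equals $\Str(T|_{\mathcal H(\mathcal P_N)})$ regularized at $s=0$, and by duality it equals its $\mathcal P_D$ counterpart, so it equals $\widetilde\xi_T(0)$. Then Step 1 forces
\[
\xi_{T,3}(0)=\xi_T(0)-\xi_{T,1}(0)-\xi_{T,2}(0)=0.
\]

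The delicate points are:
\begin{enumerate}
\item ensuring both Lefschetz formulas use the same normalization of $h_T$ (cf.\ Remark \ref{Remark_different_renormalizations});
\item confirming the no-fixed-point hypothesis on $Z$, which makes the heat-kernel interior contributions near the link vanish so that no extra local term separates the two formulas;
\item justifying meromorphic continuation of each $\eta_{T,j}$ separately. For $j=1,2$ this follows from Step 1, and for $j=3$ it then follows by subtraction.
\end{enumerate}
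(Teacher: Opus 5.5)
Your proposal is correct and is essentially the paper's proof: the identity $\widetilde{\xi}_T(0)=\xi_T(0)$ comes from recognizing both sides as the same local Lefschetz number (via \cite{weiping1990note} and the symmetrized formula of \cite{jayasinghe2023l2}), and $\xi_{T,3}(0)=0$ then follows by subtracting $\widetilde{\xi}_T(0)=\xi_{T,1}(0)+\xi_{T,2}(0)$, which your Step 1 derives from Theorem \ref{mainthm} more explicitly than the paper's ``short calculation''. Your degree-bookkeeping worry is settled by the paper's convention of grading $\mathcal{H}(\mathcal{P}_D)$ by the original form degree (as in Example \ref{example_spheres}), and the $h_T$-normalization issue does not arise, since the spectral Witt condition on the induced operator gives $\ker A=0$.
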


\begin{proof}[Proof of Theorem \ref{Theorem_simpler_invariant_computation}]
    The expressions on either side of the identity \eqref{mainthm2-eqn2} are local Lefschetz numbers. The expression on the left hand side is a \textit{symmetrized} form of the local Lefschetz formula for the cone over the link, proven when the cone has a complex structure for the Dolbeault complex in \cite{jayasinghe2023l2}. The expression on the right hand side is also the local Lefschetz number, proven in \cite{weiping1990note} when $Z$ is smooth under the assumption that $h_T=0$, but can be easily adapted to the case where $h_T \neq 0$. This proves the identity.

    Recall the definition of the function $\widetilde{\xi}_{T}(s)$ in Equation \eqref{new-equiv-xi-function}. The results in \cite[\S 5.2.5]{jayasinghe2023l2}, together with standard techniques similar to those used in \cite{weiping1990note} can be adapted to show that~$\xi_{T,1}(s)$ and~$\xi_{T,2}(s)$ admit meromorphic extensions around $s=0$, which then implies by a short calculation from the definition of $\widetilde{\xi}_{T}(s)$, that
    \[
        \widetilde{\xi}_{T}(0) = \xi_{T,1}(0) + \xi_{T,2}(0),
    \]
    i.e., $\mathcal{H}_{3}^{q}$ does not contribute to the $\widetilde{\xi}_T$ invariant. Thus the identity \eqref{mainthm2-eqn2} implies that $\xi_{T,3}(0) = 0$. Since $\mathcal{H}_{3}^{q}$ does not contribute to the $\widetilde{\xi}_T$ invariant, it does not contribute to ${\xi_T}$.
\end{proof}

We interpret the equality $\widetilde{\xi}_{T}(0) = \xi_{T}(0)$ in Theorem \ref{Theorem_simpler_invariant_computation}, and the fact that $\mathcal{H}_{3}^{q}$ does not contribute to the $\xi_T$ invariant, as exhibiting a supersymmetric cancellation at the level of the cone.

One interesting aspect of this result is that it shows that the $\xi_{T}$ invariant can be computed using supertraces over local cohomology, which can often simplify calculations, essentially because one does not need to understand the full spectrum of the Dirac operator on the link. We will exhibit this with several examples in \S \ref{section:examples}.

Given a complex structure $J_1$ on $C(Z)$, with the metric $g=dx^2+x^2S^2g_Z$ for $s=1$, we can easily extend to get a complex structure $J_S$ in general, where 
\[
    J_S(\partial_x)=Sx\alpha, \quad J_S(\frac{1}{xS}R)=-dx \quad  \text{ and } \quad J_S(V)=J_1(V)
\]
for any wedge vector field $V$ that is linearly independent from the span of $\partial_x,\frac{1}{xS}R$. The integrability of this complex structure follows from a comparison of computations in coordinates for~$\nabla^S J_S$ for general values of $S$ and for $S=1$, where the integrability holds by assumption.

Since the cohomology of the Dolbeault complex for a cone with a metric $g=dx^2+x^2Sg_Z$~with domain $\mathcal{D}_{\min}$ is isomorphic for all positive real numbers $S$ (with corresponding change in the complex structure), the equivariant index formula extends to spaces where the Dolbeault complex does not satisfy the spectral Witt condition, for the case of the minimal domain (see Remark~6.7 of \cite{jayasinghe2024holomorphic}). We next consider the case of general rolled-up extensions; essentially, the extra sections in $W$ contribute additional supertraces. We repeat Theorem \ref{intro: Theorem_general_domains_xi_invariants} for convenience.

\begin{theorem}[Theorem \ref{intro: Theorem_general_domains_xi_invariants} repeated] \label{Theorem_general_domains_xi_invariants}
    Consider the setting of Theorem \ref{Theorem_simpler_invariant_computation} but without the Spectral Witt condition. For any rolled-up extension corresponding to a choice of cohomological pre-domain $W$, the $\widetilde{\xi}_{T}$ invariants satisfy
    \begin{equation}
       \widetilde{\xi}_{T,W} =\widetilde{\xi}_{T,W}+\frac{1}{2}\Big(\Str (T|_{W^q})- \Str (T^*|_{(W^*)^q}) \Big).
    \end{equation}  
\end{theorem}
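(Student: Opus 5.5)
Note first that the displayed identity, read literally, has $\widetilde{\xi}_{T,W}$ on both sides; the intended statement (as in Theorem \ref{intro: Theorem_general_domains_xi_invariants}) has $\widetilde{\xi}_{T,\min}$ on the right. I will prove
\[
\widetilde{\xi}_{T,W}(0)=\widetilde{\xi}_{T,\min}(0)+\tfrac12\Big(\Str(T|_{W^q})-\Str(T^*|_{(W^*)^q})\Big).
\]

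The plan is to compare the local cohomology groups $\mathcal{H}(\mathcal{P}_{W,N})$ and $\mathcal{H}(\mathcal{P}^*_{W,N})$ with their minimal-domain counterparts, mode by mode. The separation of variables from the proof of Theorem \ref{mainthm} still applies without the spectral Witt condition. Every harmonic section on the truncated cone is a convergent sum of separated solutions $x^{\mu}\phi$ with $A\phi=-\mu\phi$, $\phi\in\mathcal{E}$ and $\mu>-1/2$. The modes with $\mu>1/2$ (equivalently $\mu\ge 0$ away from the small eigenvalues) lie in $\mathcal{D}_{\min}$. The only modes that distinguish domains are the finitely many with $-1/2<\mu<1/2$, and these span $\mathcal{GR}$. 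The first step is therefore to show that, degree by degree and $T$-equivariantly,
\[
\mathcal{H}^q(\mathcal{P}_{W,N})\cong \mathcal{H}^q(\mathcal{P}_{\min,N})\oplus W^q,
\]
and, for the adjoint complex, that $\mathcal{H}(\mathcal{P}^*_{W,N})$ differs from $\mathcal{H}(\mathcal{P}^*_{\min,N})$ by removing a copy of $(W^*)^{\perp}$ inside $\mathcal{H}(P^*)$. The construction makes this plausible. $\mathcal{D}_W(P)$ is $W\oplus\mathcal{D}_{\min}(P)$ closed up, so enlarging $P$'s domain by $W$ adds exactly the harmonic representatives in $W$; those representatives satisfy the Neumann condition at $x=1$ by Lemma \ref{bdy-cond-and-splitting}, since they are $\beta$-free, as in Example \ref{Example_Complex_cone_6pi}. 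Dually, $\mathcal{D}_{W^*}(P^*)$ is cut down by the Green--Stokes annihilator.

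The second step is to rewrite the adjoint contribution so that only $W^*$ appears. Here I would use the nondegeneracy of the boundary pairing $\omega$ between $\mathcal{H}(P)$ and $\mathcal{H}(P^*)$. This pairing is $T$-invariant because $T$ is an isometry preserving $x$. Invariance identifies $\mathcal{H}(P^*)/W^*$ with the dual of $W$, and $\mathcal{H}(P^*)$ with the dual of $\mathcal{H}(P)$. Comparing with the minimal case (where $W=0$ and $W^*=\mathcal{H}(P^*)$) then expresses the difference of adjoint supertraces as $-\Str(T^*|_{(W^*)^q})$ up to the maximal-domain terms, which are already absorbed in $\widetilde{\xi}_{T,\min}$. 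The degree shift $q\mapsto n-q$ in the adjoint complex must be tracked so that the signs come out as stated.

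The third step is to evaluate the weights at $s=0$. Because $W$ and $W^*$ are finite dimensional, inserting $\exp(-s\sqrt{\Delta_Z})$ changes their supertraces only by an entire function that equals the unweighted supertrace at $s=0$. The infinite-dimensional remainder is exactly the minimal-domain function, whose regular value at $s=0$ is supplied by Theorem \ref{Theorem_simpler_invariant_computation}. The minimal domain is covered there via the rescaling $g=dx^2+x^2S^2g_Z$, which restores the spectral Witt condition without changing the minimal cohomology. Adding the pieces with the factor $\tfrac12$ gives the formula.

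The main obstacle is the first step. One must check that the harmonic sections in $\mathcal{D}_{W,N}\cap\mathcal{D}_{W,N}^*$ are exactly the $\min$-harmonic sections plus lifts of $W$. This means showing that no mixed small-eigenvalue mode sneaks into the kernel through the rolled-up domain $\mathcal{D}_W(P)\cap\mathcal{D}_{W^*}(P^*)$, rather than merely through $P$'s domain. I would handle this by using the orthogonality $W\subset\mathcal{D}_{\min}(P)^{\perp}$ together with the Lagrangian property of $\mathfrak{L}_W$, which forces the $P^*$-component of any kernel element to lie in $W^*$.
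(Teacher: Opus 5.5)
Your Steps 1 and 3 are exactly the paper's proof. The paper records $W^q\oplus\mathcal H^q(\mathcal P_{\min,N})=\mathcal H^q(\mathcal P_{W,N})$ and $(W^*)^q\oplus\mathcal H^q(\mathcal P_{\max,D})=\mathcal H^q(\mathcal P_{W^*,D})$; the latter is equivalent to your ``remove $(W^*)^\perp$'' description of the adjoint side. It then notes that $W,W^*$ need no renormalization and says the formula follows from the definition. Your Step 2 is the bookkeeping the paper leaves implicit, and it is where the argument fails. By your own Step 1, the difference of adjoint supertraces is
\[
-\Str\bigl(T^*|_{\mathcal H(P^*)\ominus W^*}\bigr)=\Str\bigl(T^*|_{W^*}\bigr)-\Str\bigl(T^*|_{\mathcal H(P^*)}\bigr).
\]
So $W^*$ enters with a plus sign. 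The term $-\Str(T^*|_{\mathcal H(P^*)})$ is not ``absorbed'' in $\widetilde\xi_{T,\min}$: it is the $\mathcal H(P^*)$-part of the adjoint half of $\widetilde\xi_{T,\min}$, and it has to be removed. No tracking of the degree shift turns this into $-\Str(T^*|_{W^*})$.

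The duality you invoke, carried out correctly, gives something else. The pairing $\omega$ pairs $\mathcal H(P)$ in degree $q$ with $\mathcal H(P^*)$ in degree $q+1$, it is $T$-invariant, and $T$ is unitary. Hence $\mathcal H(P^*)\ominus W^*\cong\mathcal H(P^*)/W^*$ is $T$-equivariantly the conjugate dual of $W$, with the same eigenvalues. Therefore $\Str(T^*|_{\mathcal H(P^*)\ominus W^*})=-\Str(T|_W)$. What Steps 1 and 3, plus a correct Step 2, actually prove is
\[
\widetilde\xi_{T,W}(0)=\widetilde\xi_{T,\min}(0)+\Str(T|_W)=\tfrac12\bigl(\widetilde\xi_{T,\min}(0)+\widetilde\xi_{T,\max}(0)\bigr)+\tfrac12\bigl(\Str(T|_W)+\Str(T^*|_{W^*})\bigr),
\]
where $\widetilde\xi_{T,\max}$ is the invariant for $W=\mathcal H(P)$.

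No argument can rescue Step 2, because the displayed identity is false when $\widetilde\xi_{T,\min}$ (the $W=0$ invariant) is on the right, as you read it. At $W=0$ one has $W^*=\mathcal H(P^*)$, so the formula would force $\Str(T^*|_{\mathcal H(P^*)})=0$. But in Example \ref{example_Lefschetz_traces_0}, $\mathcal H(P^*)=\operatorname{span}\{s_1,s_2\}$ has supertrace $-(\lambda^{-1}+\lambda^{-2})\neq 0$.

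For $W_{a,b}$ the paper itself finds $\widetilde\xi_{T,W}-\widetilde\xi_{T,\min}=a^2\lambda^{-1}+b^2\lambda^{-2}=\Str(T|_W)$, which agrees with the corrected identity. By contrast, $W^*=\operatorname{span}\{bs_1-as_2\}$ sits in degree $1$, so $\tfrac12\bigl(\Str(T|_W)-\Str(T^*|_{W^*})\bigr)=\tfrac12(\lambda^{-1}+\lambda^{-2})$. This differs from $\Str(T|_W)$ unless $a^2=b^2$.

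You should prove the corrected identity above and flag the stated formula as misstated, rather than try to force the sign in Step 2.
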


\begin{proof}
    We first observe that the cohomology groups satisfy
    \begin{equation}
        W^{q} \oplus \mathcal{H}^{q}(\mathcal{P}_{\min,N})=\mathcal{H}^{q}(\mathcal{P}_{{W},N})
    \end{equation}
    and
    \begin{equation}
        (W^*)^{q} \oplus \mathcal{H}^q(\mathcal{P}_{\max,D})=\mathcal{H}^q(\mathcal{P}_{{W^*},D})
    \end{equation}
    Since the spaces $W,W^*$ are finite dimensional, we do not need to renormalize the supertraces over them. Then the formula follows from the definition of the $\widetilde{\xi}_{T,W}$ invariant for a cohomological pre-domain $W$.
\end{proof}

\begin{example} \label{example_Lefschetz_traces_0}
We consider the cohomology of the domain described at the end of Example \ref{Example_Complex_cone_6pi}. Under the rotation $\theta \mapsto \theta + \alpha$, we get the transformation $z \mapsto z e^{i\alpha}$, so that $e^{i\theta/3} \mapsto \lambda e^{i\theta/3}$ with~$\lambda = e^{i\alpha/3}$. 
Observe that 
\begin{equation}
    \Tr T|_{\{a\tau_1+b\tau_2\}}=\frac{\langle a\lambda^{-1}\tau_1+b\lambda^{-2}\tau_2, a\tau_1+b\tau_2\rangle_{L^2\Omega(X)}}{\langle a\tau_1+b\tau_2, a\tau_1+b\tau_2\rangle_{L^2\Omega(X)}}=a^2\lambda^{-1}+b^2\lambda^{-2}
\end{equation}
where we use the fact that $\tau_1,\tau_2$ are of unit norm on the cone, and that $a^2+b^2=1$. The Lefschetz supertrace for the Dolbeault complex with domain $\mathcal{D}_{W_{a,b}}(\overline{\partial})$ having cohomology generated by~$a\tau_1+b\tau_2$ and other holomorphic modes is then
\begin{equation}
    a^2\lambda^{-1} +b^2 \lambda^{-2}+ \sum_{k=0}^\infty \lambda^k = a^2\lambda^{-1} +b^2 \lambda^{-2} + \frac{1}{1 - \lambda},
\end{equation}
ignoring convergence issues that are handled by the renormalizations. This matches the supertrace on the dual complex with domain $\mathcal{D}_{W_{a,b}}(\overline{\partial})^*$, whose local cohomology is spanned by~$bs_1-as_2$ and $\overline{z}^{k/3} \overline{dz}$ for $k \geq 0$. Here we use the fact that $\overline{dz} \mapsto\lambda^{-3}\overline{dz}$, and $\overline{z} \mapsto \lambda^{-3}\overline{z}$. Observe that 
\begin{equation}
    \Tr T|_{\{bs_1-as_2\}}=\frac{\langle b\lambda^{-1}s_1-a\lambda^{-2}s_2, bs_1-as_2\rangle_{L^2\Omega(X)}}{\langle bs_1-as_2, bs_1-as_2\rangle_{L^2\Omega(X)}}=b^2\lambda^{-1}+a^2\lambda^{-2}
\end{equation}
where we use the fact that $\tau_1,\tau_2$ are of unit norm on the cone, and that $a^2+b^2=1$. This terms contributes to the supertrace with a minus sign since the anti-holomorphic one forms are in degree $1$ in the Dolbeault complex. Together with the contributions from the supertrace over the rest of the sections of the cohomology group, we get the supertrace as
\begin{multline}
    -b^2\lambda^{-1}-a^2\lambda^{-2}-\sum_{k=-3}^{-\infty} \lambda^k = a^2\lambda^{-1}+b^2\lambda^{-2} - \sum_{k=-1}^{-\infty} \lambda^k \\=
    a^2\lambda^{-1}+b^2\lambda^{-2}- \frac{\lambda^{-1}}{1 - \lambda^{-1}}=a^2\lambda^{-1}+b^2\lambda^{-2}+
    \frac{1}{1 - \lambda}    
\end{multline}
where in the first equality we use the fact that $a^2+b^2=1$.

Thus it is easy to see that the equivariant $\xi_T$ invariant is 
\[
    \xi_{T} = a^2\lambda^{-1} +b^2 \lambda^{-2} + \frac{1}{1 - \lambda},
\]
which differs from that for the minimal domain by $a^2\lambda^{-1} +b^2 \lambda^{-2}$.
\end{example}

\begin{remark} \label{Remark_on_Chou_Zhang_work}
    In Theorem (5.23) of \cite{chou1985dirac} (c.f. Theorem 3.1 of \cite{chou1989criteria}),there are additional terms in the index defect given by dimensions of \textit{small eigenvalue} eigenspaces for the Dirac operator when the spectral Witt condition is not satisfied. 
    These small eigenvalues correspond to the limit circle cases of the singular Sturm Liouville equations when doing separation of variables (see page 3 of \cite{chou1985dirac}), and lead to more local sections in the null space of the Dirac operator in each degree.

    It is easy to check that the proof of Theorem (5.23) of \cite{chou1985dirac}, together with the equivariant version in \cite{weiping1990note}, can be extended to see understand the equivariant index defect in such cases. 
    We note that slightly different conventions in the definitions of the $\eta$ invariants lead to sign differences for results in \cite{weiping1990note,chou1985dirac}.
\end{remark}
\section{Examples and special cases}\label{section:examples}

We begin by briefly reviewing the equivariant $\eta$ invariant computations of \cite{Degeratuthesis} for the sphere (in which case it is also equal to twice the $\xi_T$ invariant since there are no harmonic spinors on the sphere and $h_T=0$), where the eigenvalues and the eigensections of the Dirac operator were studied using representation theory.

\subsection{$\eta$ invariants for the spin Dirac operator on round spheres}

We show that the new formula for the equivariant index defect given by the $\widetilde{\xi}_T$ invariant is simpler to compute than the $\xi_T$ and $\eta_T$ invariants for the spin Dirac operator on the spheres~$S^{2n-1}$. We compare this with the computations for the $\eta$ invariants using the spectrum of the Dirac operators in \cite{Degeratuthesis}, where she compares them with Molien series in the equivariant case, referring to \textit{loc. cit.} for details.

We review the proof of \cite[Theorem 4.1.1]{Degeratuthesis} which establishes a formula for the equivariant~$\eta$~invariant on $S^{2n-1}$. The spectrum of the spin Dirac operator on $S^{2n-1}=U(n)/U(n-1)$~is computed using representation theoretic methods. The computations are involved and we describe them briefly. The spectrum of the spin Dirac operator is shown to admit a three-part decomposition in Chapter 3.4 of \textit{loc. cit.}, by which the equivariant $\eta$ invariant $\eta_{T_g}(s):=\eta(g,s)$ decomposes as
\[
    \eta(g,s) = \eta_{1}(g,s) + \eta_{2}(g,s) + \eta_{3}(g,s)
\]
which corresponds exactly to our threefold splitting \eqref{threefold-partition} where $g \in \mathbb{T}^n$. The focus there is initially on the case where $g \in \mathbb{T}^n$ such that 1 is not an eigenvalue. The first contribution is
\begin{equation}
    \eta_1(g,s)=(-1)^n \sum_{a \geq 0} \Big(\frac{2n-1+2a}{2\sqrt{2}} \Big)^{-s} \det(g)^{1/2} H_a^n(g)
\end{equation}
where $H^n_a$ is the $a$-th symmetric polynomial in $n$ variables (the sum of all distinct monomials of degree $a$). Then evaluating at $s=0$ one gets
\begin{equation}
    \eta_1(g,0)=(-1)^n \det(g)^{1/2} \sum_{a \geq 0} H^n_a(g)=(-1)^n \det(g)^{1/2} \frac{1}{\det(I_n-g)}
\end{equation}
where the last equality uses the identity
\begin{equation}
    \prod_{i=1}^n \frac{1}{1-x_it}=\sum_{a \geq 0} H^n_a t^a.
\end{equation}

The computations for case 2 are similar to case 1, and it is shown that the contribution to the~$\eta$~invariant is 
\[
    \eta_{2}(g,0)=\det(g)^{-1/2}\frac{1}{\det(I_n-g^{-1})}.
\]
In the third case it is shown that
\begin{align*}
    \eta_{3}(g,s)
        &=\sum_{a,b\ge 0}\sum_{r=0}^{\,n-1}
        \Big(\frac{(-1)^{\,n+r}+2n+2(a+b)}{2\sqrt{2}}\Big)^{-s}\,
        \Tr\big(g\big|_{V_{\mu(a,r,b)}}\big)\\[4pt]
        &\qquad\qquad
        - \sum_{a,b\ge 0}\sum_{r=0}^{\,n-1}
        \Big(\frac{(-1)^{\,n+r-1}+2(n-2r-2)+2(a+b)}{2\sqrt{2}}\Big)^{-s}\,
        \Tr\big(g\big|_{V_{\mu(a,r,b)}}\big),
\end{align*}
where $V_{\mu(a,r,b)}$ is the irreducible representation of $U(n-1)$ corresponding to the highest weight
\[
    \mu(a,r,b) =\big(a+\tfrac12,\tfrac12,\dots,\tfrac12,
    -\tfrac12,\dots,-\tfrac12,-b-\tfrac12\big)
\]
with the sequence $(-1/2,\ldots,-1/2)$ having length $r$. While not identically $0$, in the limit as~$s \to 0$ the contributions cancel off to yield
\begin{equation} \label{cancellation}
    \eta_{3}(g,0) = \lim_{s \to 0} \eta_{3}(g,s) = 0.
\end{equation}
Thus the equivariant $\eta$ invariant for the odd-dimensional sphere $S^{2n-1}$ is
\begin{align*}
    \eta(g,0) &= \eta_1(g,0)+\eta_2(g,0) \\
    &=(-1)^n \det(g)^{1/2} \frac{1}{\det(I_n-g)}+\det(g)^{-1/2} \frac{1}{\det(I_n-g^{-1})} \\
    &= (-1)^n \det(g)^{1/2} \frac{2}{\det(I_n-g)} \numberthis \label{degeratu-equiv-eta}
\end{align*}
for $g \in \mathbb{T}^n$ such that 1 is not an eigenvalue. Since the Lichnerowicz identity shows that there are no harmonic spinors on the spheres, this is equal to the equivariant $\xi_T$ invariant up to a factor of $1/2$.

We emphasize that the calculations leading to Equation \eqref{degeratu-equiv-eta} required a complete analysis of the spectrum of the Dirac operator on $S^{2n-1}$. In fact, a careful examination of the supertraces over local cohomology in Equation \eqref{new-equiv-xi-function} shows that it is not necessary to understand the full spectrum of the Dirac operator in order to compute the equivariant $\xi$ invariant. Indeed, it is already seen in the cancellation \eqref{cancellation} that some of the eigenvalues give no contribution. In this sense the equivariant~$\xi$~invariant defined by Equation \eqref{equiv-xi-function} encodes superfluous information. By contrast, the invariant $\widetilde{\xi}$ \eqref{new-equiv-xi-function}, given in terms of supertraces over local cohomology, is actually easier to compute because it does not involve any excess information, and does not require a complete understanding of the spectrum.

Regarding Equation \eqref{cancellation}, we note that Theorem \ref{intro: mainthm} in the case of the cone over the odd-dimensional sphere matches with this cancellation and, in fact, Theorem \ref{intro: mainthm} implies that such a cancellation occurs more generally. As far as we know the underlying mechanism of this cancellation has not yet been explained in the literature.

To demonstrate the computational efficacy of the equivariant $\widetilde{\xi}$ invariant, we will show how to recover the formula \eqref{degeratu-equiv-eta} using the equivariant $\widetilde{\xi}_{T_g}$ invariant. The computations are much simpler, described in terms of the holomorphic sections of the spinor bundle $L=\sqrt{\mathcal{K}}$, where $\mathcal{K}$ is the canonical bundle on $\mathbb{C}^n$ and with $\mathbb{C}^{n}$ regarded as a cone over the sphere $S^{2n-1}$, and closely follows \cite[\S 7.2.6]{jayasinghe2023l2} and \cite[\S 7.3]{jayasinghe2024holomorphic} to which we refer the reader for details on the isomorphisms between the spin Dirac operator and the Dolbeault-Dirac operator acting on sections of $L$.

\begin{example}[$\eta$ invariant of odd-dimensional spheres]
\label{example_spheres}

We consider the standard torus action of on $\mb{C}^{n} = C(S^{2n-1})$, equipped with the twisted Dolbeault-Dirac complexes $\mathcal{P}_B(C(Z))$ for $B=N/D$. Fix the standard complex coordinates on $\mb C^{n}$. For a multi-index $\mathbf m=(m_{1},\dots,m_{n})\in\mb N^{n}$ write
\[
    z^{\mathbf m}:=z_{1}^{m_{1}}\cdots z_{n}^{m_{n}},\hspace{3mm}
    \bar z^{\mathbf m}:=\bar z_{1}^{m_{1}}\cdots \bar z_{n}^{m_{n}},\hspace{3mm}
    |\mathbf m|:=\sum_{j=1}^{n}m_{j}.
\]
The $\overline{\partial}$-Neumann local cohomology is spanned by the twisted monomials
\[
    \mc{H}^0\left( \mathcal{P}_{N}(C(Z))\right) = \operatorname{span}_{\mb C}\{z^{\mathbf m} \otimes \ell^{1/2}:\mathbf m\in\mb N^{n}\}
\]
where $\ell^{1/2} = \sqrt{dz_{1}\wedge dz_{2}\cdots \wedge dz_{n}}$ is a the local section that trivializes a square-root $K^{1/2}$ of the canonical line bundle. The $\overline{\partial}^*$-Neumann local cohomology is spanned by the twisted antiholomorphic $n$-forms
\[
    \mc{H}\left(P_{N}(C(Z))\right) = \operatorname{span}_{\mb C}\{\,\bar z^{\mathbf m}\,d\bar z_{1}\wedge\cdots\wedge d\bar z_{n}\otimes \ell^{1/2}: \mathbf m\in\mb N^{n}\,\}.
\]
By conjugating to an element of a maximal torus we may assume without loss of generality that~$g$ is diagonal and write
\[
    g\cdot z_{k} = \lambda_{k}z_{k}, \hspace{3mm} \lambda_{k} \in S^{1}.
\]
whence the action by $g$ on anti-holomorphic 1-forms is determined by
\[ 
    g\cdot\left(\bar z^{\mathbf m}\right) = \prod_{j=1}^{n}\lambda_{j}^{-m_{j}}\,\bar z^{\mathbf m}, \hspace{3mm} g^{*}(d\bar z_{j}) = \lambda_{j}^{-1}d\bar z_{j}.
\]
Thus, given an element
\[
    \Phi_{\mathbf m}:=\bar z_{1}^{m_{1}}\cdots\bar z_{n}^{m_{n}} d\bar z_{1}\wedge\cdots\wedge d\bar z_{n}\otimes L^{1/2} \in \mc{H}(\mathcal{P}_{D}(C(Z)))
\]
we see that the action by $g$ is given by
\begin{align*}
    g\cdot\Phi_{\mathbf m}
    = \left(\prod_{j=1}^{n}\lambda_{j}^{-m_{j}}\right)
    \left(\prod_{j=1}^{n}\lambda_{j}^{-1}\right)
    \left(\prod_{j=1}^{n}\lambda_{j}^{1/2}\right)
     \Phi_{\mathbf m}= \prod_{j=1}^{n}\lambda_{j}^{-(m_{j}+1/2)} \Phi_{\mathbf m}.
\end{align*}

For the $\overline{\partial}$-Dirichlet boundary conditions, the supertrace over local cohomology contributes
\begin{align*}
\begin{split}
   \Str\left(e^{-sD_{Z}}T_{g}\big|_{ \mathcal{H}(\mathcal{P}_{D}(C(Z)))}\right)
    &= (-1)^{n}\Tr\left(e^{-sD_{Z}}T_{g}\big|_{ \mc{H}(\mathcal{P}_{D}(C(Z)))}\right) \\
    &= (-1)^{n} \prod_{k=1}^{n} \left( \sum_{m_{k}\geq 0} \lambda_{k}^{-(m_{k}+1/2)} e^{-sm_{k}} \right) \\
    &= (-1)^{n} \prod_{k=1}^{n} \left( \lambda_{k}^{-1/2} \sum_{m_{k}\geq 0} (\lambda_{k}^{-1} e^{-s})^{m_{k}} \right).
\end{split}
\end{align*}
Summing the geometric series and letting $s\to 0$ yields
\[
    \lim_{s\to 0}\, \Str \left(e^{-sD_{Z}}T_{g}\big|_{ \mc{H}(\mathcal{P}_{N}(C(Z)))}\right)
= (-1)^{n}\prod_{k=1}^{n} \frac{\lambda_{k}^{-1/2}}{1-\lambda_{k}^{-1}}.
\]
For $\overline{\partial}$-Neumann boundary conditions, the supertrace over local cohomology contributes
\begin{align*}
    \Str \left(e^{-sD_{Z}}T_{g}\big|_{ \mc{H}(\mathcal{P}_{N}(C(Z)))}\right)
    = \prod_{k=1}^{n} \left( \sum_{m_{k}\geq 0} \lambda_{k}^{m_{k}+1/2} e^{-sm_{k}} \right)
    = \prod_{k=1}^{n} \left( \lambda_{k}^{1/2} \sum_{m_{k}\geq 0} (\lambda_{k} e^{-s})^{m_{k}} \right)
\end{align*}
and therefore
\[
    \lim_{s\to 0}\, \Str \left(e^{-sD_{Z}}T_{g}\big|_{ \mc{H}(\mathcal{P}_{D}(C(Z)))}\right) = \prod_{k=1}^{n} \frac{\lambda_{k}^{1/2}}{1-\lambda_{k}}.
\]
Now we observe that these two contributions coincide:
\[
    (-1)^{n}\prod_{k=1}^{n} \frac{\lambda_{k}^{-1/2}}{1-\lambda_{k}^{-1}} = \prod_{k=1}^{n} \frac{\lambda_{k}^{1/2}}{1-\lambda_{k}} = (-1)^{n}\det(g)^{1/2}\frac{1}{\det(I_{n}-g)}.
\]
Consequently we find that the complex equivariant $\xi$ invariant is given by
\begin{align*}
    2\widetilde{\xi}(g,0) &= \Str \left(e^{-sD_{Z}}T_{g}\big|_{ \mc{H}(\mathcal{P}_{N}(C(Z)))}\right) + \Str\left(e^{-sD_{Z}}T_{g}\big|_{ \mc{H}(\mathcal{P}_{D}(C(Z)))}\right) \\
    &= (-1)^{n}\det(g)^{1/2}\frac{2}{\det(I_{n}-g)} \\
    &= \eta(g,0)
\end{align*}
which recovers Degeratu's formula \cite[Theorem 4.1.1]{Degeratuthesis}. Note once again that $\eta(g,0) = 2\xi(g,0)$ since $h = 0$ for the sphere.
\end{example}

\begin{remark}
    In \cite{Degeratuthesis} it is shown how the case when $g$ has $1$ as an eigenvalue (including the case of $g=\mathrm{Id}$) can be derived from the earlier cases, in particular noting that the contributions from the spectrum in case 3 continues to be zero due to cancellations.
\end{remark}

\begin{remark}
    In \cite{Degeratuthesis} Chapter 4.4, Degeratu gives an equivariant $\eta$ formula for the twisted Dirac operator, corresponding to a homogeneous vector bundle $E$, which she shows admits a lift to the double cover of $U(n)$. If it admits an extension as a Hermitian bundle on the cone, then it is easy to recover her equivariant index formulas by multiplying by the formulas for the untwisted spin Dirac operator with an additional factor of $\chi_E(g)$.
\end{remark}

\subsection{Orbifolds} \label{subsection_example_orbifolds}

The case of equivariant $\eta$ invariants for orbifolds are of great interest, and are studied for quotient singularities in \cite{Degeratuthesis} as well (c.f. \cite{degeratu2003geometrical}). Our results appear to generalize to orbifolds (even without conic metrics) in a straightforward fashion, and we briefly indicate several examples in the literature to support this claim.

The zero set of the quadric $Z^2-XY=0$ in $\mathbb{C}^3$ admits an embedding into $\mathbb{CP}^3$ with one isolated conic singularity, which admits a toric action which generically has 2 smooth isolated fixed points and fixes the singular point. The equivariant index defect at the fixed singular fixed point for the spin Dirac operator was calculated in (Example 7.30 of \cite{jayasinghe2023l2}) and it is easy to see how this can be extended to the corresponding $\xi_T$ invariant. In Remark 7.32 of \cite{jayasinghe2023l2}, the correspondence between the equivariant index formula for the algebraic variety with those given by Edidin-Graham \cite{localization_algebraic_Graham_Edidin} for the same example was highlighted, and we refer the reader to those texts. We will not distinguish between the $\widetilde{\xi_T}$ and $\xi_T$ invariants in light of Theorem \ref{intro: mainthm}, and will avoid details of the renormalization (which is similar to the renormalizations in Example \ref{example_spheres} above) in what follows.

\begin{example}[Quadric cone]
    As in Example 7.28 of \cite{jayasinghe2023l2}, we consider the quadric cone $\widehat{M} = \{[W:X:Y:Z] \in \mathbb{CP}^3 \mid Z^2 = XY\}$ with $\mathbb{T}^2$-action 
    \[(\lambda, \mu) \cdot [W:X:Y:Z] = [W:\lambda^2X:\mu^2Y:\lambda\mu Z], \qquad (\lambda,\mu)\in \mathbb{T}^2.\]
    The fixed points are $a_1=[0:1:0:0]$, $a_2=[0:0:1:0]$ (smooth) and $v=[1:0:0:0]$ (the conic singularity). At $a_1$ the tangent weights are $\chi_1 = \mu/\lambda$ and $\chi_2 = 1/\lambda^2$, whence 
    \[\xi_T^{a_1}= \frac{1}{(1-\mu/\lambda)(1-1/\lambda^2)}\]
    for geometric endomorphisms $T$ corresponding to generic values of $\lambda,\mu$ for which the fixed points are isolated, where the equivariant $\xi$ invariant at $a_1$ is the index defect of the smooth cone over the sphere corresponding to the unit ball on the tangent space at $a_1$.
    Similarly, $\xi_T^{a_2}= \frac{1}{(1-\lambda/\mu)(1-1/\mu^2)}$. Near $v$, using coordinates $(x,y,z)=(X/W,Y/W,Z/W)$, the action is $(x,y,z)\mapsto(\lambda^2x,\mu^2y,\lambda\mu z)$, and the local holomorphic functions are generated by~$x^ay^b$ and $z\,x^ay^b$. Thus 
    \begin{align*}
        \xi_T^v
        &= \sum_{a,b \geq 0} \lambda^{2a}\mu^{2b} + \sum_{a,b \geq 0} \lambda^{2a+1}\mu^{2b+1} \\
        &= \frac{1}{(1-\lambda^2)(1-\mu^2)} + \frac{\lambda\mu}{(1-\lambda^2)(1-\mu^2)} = \frac{1+\lambda\mu}{(1-\lambda^2)(1-\mu^2)}.
    \end{align*}
    A direct computation shows $\xi_T(\widehat{M}) = \xi_T^{a_1}+\xi_T^{a_2}+\xi_T^{v}=1$, giving the global equivariant index of~$\widehat M$ as the sum of its local $\xi_T$ invariants.
\end{example}

\begin{example}[Quadric cone, spin $\xi$ invariants]
    In the spin Dirac framework \cite[Example~7.30]{jayasinghe2023l2}, the same torus action induces characters on the spinor bundle. At the smooth fixed points $a_1$ and $a_2$, the spin contributions coincide with the Atiyah-Bott formula, now interpreted for the spin Dirac operator. At the singular vertex $v$, one must work with the tangent cone, which in this case is a cone over a $\mathbb{Z}_2$-quotient of $S^3$. The local spinor fields are generated by monomials of the form
    \[
        \{\,x^a y^b, z x^a y^b : a,b \geq 0\,\},
    \]
    exactly as in the Dolbeault case, but up to a product of the local trivializing section of the square-root of the canonical bundle given by~$(dx\wedge dy/(2z))^{1/2}$, the square-root of the Poincar\'e residue (see Theorem 2.2 of \cite{hitchin1974harmonic}, Section 7.3 of \cite{jayasinghe2024holomorphic}). Taking the supertrace of the~$\mathbb{T}^2$-action on this local cohomology produces
    \[
        \xi_T^v = \frac{(1+\lambda\mu)\sqrt{\lambda\mu}}{(1-\lambda^2)(1-\mu^2)},
    \]
    which agrees with the Dolbeault computation. Thus, the local $\xi_T$ invariant at the singular vertex naturally appears as the spin Lefschetz contribution of the cone. Summing over all three fixed points again yields
    \begin{align*}
        &\xi_T^{a_1} + \xi_T^{a_2} + \xi_T^v
        = \frac{\sqrt{(\mu/\lambda)(1/\lambda^2)}}{(1-\mu/\lambda)(1-1/\lambda^2)} + \frac{\sqrt{(\lambda/\mu)(1/\mu^2)}}{(1-\lambda/\mu)(1-1/\mu^2)} + \frac{(1+\lambda\mu)\sqrt{\lambda\mu}}{(1-\lambda^2)(1-\mu^2)} = 0.
    \end{align*}
\end{example}

The case of an equivariant index defect for a teardrop orbifold was studied in \cite[\S 3]{meinrenken1998symplectic} using the cohomological equivariant index formulas of Vergne \cite{vergne1994quantification}. At the singular point of~$\mathbb{C}/\mathbb{Z}_k$~with coordinate $w^k$ equipped with the action $z \mapsto e^{i\phi} z$ where $z=w^{-k}$, the local equivariant index is shown to be 
\begin{equation}
    \frac{1}{k} \sum_{l=0}^{k-1} \frac{1}{1-c^le^{-\frac{i}{k}\phi}}, \quad c=e^{-2\pi/k}
\end{equation}
and it is only after simplifying the given expression to $\frac{1}{1-e^{i\phi}}$, that the power series expansion matches the supertraces over the local cohomology groups, as was shown in \cite[\S 7.3.2]{jayasinghe2023l2}. This shows that the formulas for the equivariant indices in \cite{vergne1994quantification} do not easily admit categorifications by spectral data.
\section{Context and conclusions}\label{section_conclude}

Historically, the study of $\eta$ invariant and index defects was motivated by questions posed by Hirzebruch concerning the signature defect arising from cusp ends on modular surfaces, that is, the failure of the Hirzebruch signature theorem in their presence. The work included deriving explicit number theoretic formulae for it; see Section 3 of \cite{hirzebruch1973hilbert}. In particular, the case of quotient singularities for $\mathbb{C}^n/G$ where $G$ is the group of $p$-th roots of unity was proven by resolving singularities. Equivariant versions of these invariants were introduced in \cite{donnelly1978eta} to describe equivariant index defects on manifolds with boundary corresponding to group actions on such spaces.

For singular spaces, Cheeger showed that the index defect for a Dirac operator at a conical singularity was described by an $\eta$ invariant on the link $Z$ \cite{cheeger1983spectral} (c.f. \cite{chou1985dirac,chou1989criteria}), which includes many cases of orbifolds and algebraic varieties. The corresponding equivariant version was studied by Zhang \cite{weiping1990note}. The work of Baum-Fulton-MacPherson-Quart \cite{baum1979lefschetz,Baumformula81,baumfultonmacKtheoryRiemannRoch,baumfultonmacpheresonRiemannRoch} explained in the introduction has been extended to other formulations using algebraic geometric tools, for instance in \cite{equivariant_intersection_edidin,localization_algebraic_Graham_Edidin,MaximJorgCharacteristicsingulartoric2015}. Our work in this article is a step towards bridging these descriptions.

\subsection{Supersymmetric counting and different categorifications}

Supersymmetric (SUSY) quantum field theory (QFT) is widely used to understand qualitative features of QFT's. The main reason is that one can do explicit computations of path integrals and other physically relevant observables. Roughly, asymptotic expansions of such integrals will have cancellations in all higher order terms except for the first term when there is supersymmetry, akin to how the Duistermaat-Heckman theorem states that all higher order terms in the semi-classical Fourier transform of the moment map of a symplectic space vanish. It is well known that both of these results correspond to versions of Atiyah-Bott localization, and are corollaries of appropriate equivariant index theorems. Thus the character formulas at fixed points, boundaries and singularities corresponding to equivariant index theorems for various twisted Dirac operators correspond to supersymmetric state counting. We refer the reader to \cite{witten1981dynamical,witten1983fermion} and \cite{pestun2017localization} for a more expansive overview.

Moreover these integer coefficients of the character formulas appearing in physical problems, are sometimes interpreted as the virtual dimensions of relevant moduli spaces of equivariant instantons (see for instance \cite{festuccia2020twisting,pestun2012localization}). In this light, the different categorifications of the RR numbers lead to different classes of objects sharing the virtual dimensions of the moduli spaces, namely the eigenspaces of the Dirac operator on the link, vs the local cohomology groups.

In fact in the work in \cite{festuccia2020transversally,festuccia2020twisting}, the authors use expansions of the equivariant indices at smooth fixed points in terms of holomorphic and anti-holomorphic forms, as opposed to equivariant $\eta$ invariant expansions which would lead to additional terms before cancellations. While this is natural to do in the smooth case instead of treating the tangent plane at a fixed point $\mathbb{C}^n$ as a cone over a sphere, in the singular setting, our exposition shows how to reconcile the differences between the equivariant $\eta$ invariant approaches and the local cohomology based approach.

Equivariant and non-equivariant $\eta$ invariants and $\xi$ invariants also show up when studying determinant line bundles and understanding gravitational anomalies \cite{witten1985global}, and sometimes with different renormalization schemes (see Remark \ref{Remark_different_renormalizations}). In such instances, our work explains symmetries of the objects that categorify these invariants when there is a complex structure.

The equivariant RR defects at singularities (and boundaries) admit several interesting categorifications. The RR number of a space with a complex structure and a Hermitian bundle is the (supersymmetric) count of the global sections of the Hermitian bundle, expressed in terms of cup products of characteristic classes related to the bundle. This count is categorified to the level of cohomology and further to the level of K-theory by the Hirzebruch-RR and Grothendieck-RR theorems.

The equivariant RR number admits a different categorification to a subcomplex of the Dolbeault complex called the holomorphic Witten instanton complex, first proposed by Witten (see \cite{witten1984holomorphic,mathai1997equivariant}, while the $\mathbb{Z}$-graded complex fails to hold for certain non-K\"ahler spaces (see \cite{wu1996equivariant}), the local equivariant RR number is still a supertrace over the local cohomology groups, thus providing an infinite dimensional analog of the index bundle in ($\mathbb{Z}_2$-graded) K-theory, near fixed points. There are recent studies of eta forms in differential K-theory \cite{liu2021equivariant} which also seem to admit simplifications in this way. The $\xi_T$ invariant of an operator $A$ is naturally categorified by the eigenspaces of the Dirac operator, and our results show that only certain eigenspaces contribute meaningfully. This informs constructions of equivariant index bundles at the fixed points.

We do not study orbifold equivariant index theorems such as in \cite{meinrenken1998symplectic,vergne1994quantification} in this article (confining ourselves to the discussion at the end of  \S\ref{subsection_example_orbifolds}), where the approach hinges on associating linear algebraic formulas on a smooth local cover near the singular fixed points to the fixed point on the singular space. Even in the smooth setting, we emphasize the fact that the categorifications of local contributions to the equivariant index stem from other interpretations of the local Lefschetz numbers.

\subsection{Conjectural extensions}

An obvious question here is the following.
\begin{question} 
    In the almost complex setting, can the $\eta_T$ invariant for geometric endomorphisms $T$ with non-isolated fixed points (including $T=\mathrm{Id}$, in which case it is the $\eta$ invariant) be expressed as a renormalized supertrace over the local cohomology groups? In other words, does Theorem~\ref{Theorem_simpler_invariant_computation} extend to more general $T$, including $T=\mathrm{Id}$?
\end{question}

We present a heuristic which suggest that the answer to the first question is yes, explaining the difficulties in the proof. 
The problem here is that the $\eta$ invariant is studied via the renormalization corresponding to the Dirac operator $A$ on the link, which yields the equivariant eta function~$\eta_T(s)$~as an intermediary, and as the examples in Section \ref{section:examples} and the work in \cite{Degeratuthesis} shows, $\eta_T(s)$ only admits the cancellations at $s=0$. 
We see two possible ways to navigate this problem.
\begin{enumerate}
    \item A different renormalization of the $\eta$ invariant which incorporates the information from the supersymmetry on the cone over the link or another appropriate associated geometric object where the supersymmetric cancellations are clearer.
    
    \item Studying the eigenspaces of the operator directly, and understanding symmetries between the positive and negative eigenspaces of the Dirac operator $A$ directly.
\end{enumerate}

The fact that $\xi_{T,3}(s)$ in Theorem \ref{intro: mainthm} does not vanish for general $s$ shows that there is no SUSY cancellation (in the conventional sense) happening at the level of the alternating trace over the eigensections of the operator $A$, even though renormalized supersymmetric cancellations in \cite[\S 5.2.5]{jayasinghe2023l2} are why we see that the partition $\mathcal{H}^q_3$ does not contribute to the equivariant index defect. Our contention is that the renormalizations used in the definition of the $\eta$ invariant, using the spectral information on the link, are not well suited for this. We refer to \cite{jayasinghe2023l2} (see Lemma 3.13) for a description of SUSY. This shows that for a given Dirac type operator $D$, if there is an eigensection $\phi$ with eigenvalue $\mu$, then the section $D\phi$ is also an eigensection of $D$~with the same eigenvalue $\mu$. It is this cancellation that leads to the vanishing of $\xi_{T,3}(0)$, but is seen only at the level of the eigenvalues of the operator $D$ on the cone, not the induced operator $A$ on the link, which is why $\xi_{T,3}(s)$ does not vanish identically.

In \cite{capoferri2025hodgetheoremcurlasymmetric,capoferri2025microlocalpathwayspectralasymmetry,capoferri2025spectralasymmetrypseudodifferentialprojections} the authors study delocalized versions of the $\eta$ invariant in terms of pseudodifferential operators, with projectors to the negative and positive eigenspaces of the Dirac operator on the link playing a key role. This aligns with our approach to understand the spectral asymmetry intuitively as a difference between numbers of positive and negative eigenvalues, incorporating cancellations in our case. We refer the reader to \cite{goette2012computations,goette2009} for other studies on delocalized equivariant $\eta$ invariants.

The connections between the spectral flow and the $\eta$ invariant are also of interest in physics (see sections 12-14 of \cite{nakahara2018geometry}). These connections appear when studying relative index theorems, which have also been used to develop RR results (see for instance \cite[\S 6.3]{melroseAPS} for the case of general divisors on surfaces).

The results seem to easily extend to more general singular metrics, for instance as studied in \cite{jayasinghe2025SUSY}, as well as wedge metrics on cones $C(Z)$ where $Z$ is a stratified pseudomanifold, for suitable choices of domains. In fact, the work in \cite{jayasinghe2023l2} for spin-c Dirac operators in the almost complex setting hints at extensions to that setting as well.

Moreover these symmetries should lead to simplifications of eta forms, Bismut-Cheeger~$\mathcal{J}$~forms and other objects arising in formulations of index defects at singularities \cite{Albin_2017_index,wang2011noteequivariantetaforms} when there are complex structures, all needing analytic techniques developed to incorporate these symmetries. Explicit computations show that these are connected to spectral theoretic descriptions of equivariant RR numbers, similar to those in \cite{wu1998equivariant} where such descriptions are given near non-isolated fixed point sets. We will investigate this further in upcoming work.

\subsection{Algebraic descriptions of $\eta$ invariants}

The studies of \cite[\S 5]{Degeratuthesis} of the correspondence between equivariant $\eta$ invariants for spin Dirac operators and Molien series allows one to more easily compute $\xi_T$ invariants using algebraic data.

In \cite{baum1979lefschetz,Baumformula81}, the authors show that the Lefschetz-Riemann-Roch (LRR) number (in singular cohomology) for a complete intersection can be expressed as a regularized supertrace over the local ring, the analog of the local cohomology group for $P$. In the case of normal algebraic varieties, with vanishing higher local $L^2$ cohomology, the Hilbert space completion of the local ring can be identified with the local cohomology group. The formulas in \textit{loc. cit.} seem to admit extensions to the non-equivariant case in the singular setting stated as discussed above. This would be an \textbf{algebraic $\eta$ invariant}, which could be interesting to try to extend to non-normal algebraic varieties to gain a better understanding of the index defects of the RR theorems in \cite{baumfultonmacKtheoryRiemannRoch,baumfultonmacpheresonRiemannRoch}. Equivariant index theory in complex geometry which builds on \textit{loc. cit.} (see for instance \cite{equivariant_intersection_edidin,localization_algebraic_Graham_Edidin,cappell2023equivariant}) also can be understood in this way.

In the complex case, since the equivariant signature is given by the $\chi_{+1}$ invariant (generalizing Hirzebruch's formula to the singular $L^2$ setting, see for instance in \cite{jayasinghe2023l2}), where the supertraces over different local cohomology groups for values $p$ in the Hodge decomposition are used to denote the equivariant signature. These supertraces correspond to the equivariant $\eta$ invariants, and conjecturally for (non-equivariant) $\eta$ invariant as well in the (almost) complex setting.

%------------------------------
% appendix and references
%------------------------------
\bibliographystyle{alpha}
\bibliography{references}
\end{document}